\numberwithin{equation}{section}
\newtheorem{thm}{Theorem}[section]
\newtheorem{prop}[thm]{Proposition}
\newtheorem{lemm}[thm]{Lemma}
\newtheorem{cor}[thm]{Corollary}
\newtheorem{defi}[thm]{Definition}
\def\cal#1{\mathcal{#1}}
\def\scr#1{\mathscr{#1}}
\def\rm#1{\mathrm{#1}}
\def\bb#1{\mathbb{#1}}
\def\wh#1{\widehat{#1}}
\def\ol#1{\overline{#1}}
\newcommand{\ds}{\displaystyle}
\newcommand{\QQ}{\mathbb{Q}}
\newcommand{\Qlb}{\overline{\mathbb{Q}}_\ell}
\newcommand{\ZZ}{\mathbb{Z}}
\newcommand{\Hom}{\mathrm{Hom}}
\newcommand{\Rep}{\mathrm{Rep}}
\newcommand{\Vect}{\mathrm{Vect}}
\newcommand{\LocSys}{\mathrm{LocSys}}
\newcommand{\Sat}{\mathrm{Sat}}
\newcommand{\FS}{\mathrm{FS}}
\newcommand{\YZ}{\mathrm{YZ}}
\newcommand{\Hck}{\mathcal{H}\mathrm{ck}}
\newcommand{\HHom}{\mathscr{H}\mathrm{om}}
\newcommand{\Div}{\mathrm{Div}}
\newcommand{\ULA}{\mathrm{ULA}}
\newcommand{\et}{\mathrm{\acute{e}t}}
\newcommand{\Det}{D_{\mathrm{\acute{e}t}}}
\newcommand{\bd}{\mathrm{bd}}
\newcommand{\CT}{\mathrm{CT}}
\newcommand{\dimtrg}{\mathrm{dim.trg}}
\newcommand{\Gr}{\mathrm{Gr}}
\DeclareMathOperator{\Spec}{Spec}
\DeclareMathOperator{\Spd}{Spd}
\title{\Large{Relation between the two geometric Satake equivalences via nearby cycle}}
\author{Katsuyuki Bando\footnote{Katsuyuki Bando, Research Institute for Mathematical Sciences, Kyoto University, Kyoto, Japan, \texttt{kbando@kurims.kyoto-u.ac.jp}}}
\begin{document}
\maketitle
\renewcommand{\thefootnote}{\fnsymbol{footnote}}
\footnote[0]{Keywords and phrases: arithmetic geometry, geometric Satake, affine Grassmannian, perfectoid space, geometric Langlands, nearby cycle}
\footnote[0]{2020 MSC numbers: 14G45(Perfectoid spaces and mixed characteristic)}
\footnote[0]{Proposed running head: Relation between two geometric Satake via nearby cycle}
\renewcommand{\thefootnote}{\arabic{footnote}}
\abstract{Fargues and Scholze proved the geometric Satake equivalence over the Fargues--Fontaine curve.
On the other hand, Zhu proved the geometric Satake equivalence using a Witt vector affine Grassmannian.
In this paper, we explain the relation between the two versions of the geometric Satake equivalence via nearby cycle.}
\section{Introduction}
Let $F$ be a $p$-adic local field with a residue field $\bb{F}_q$.
Write $\cal{O}_F$ for its ring of integers.
Put $k:=\ol{\bb{F}}_q$, an algebraic closure of $\bb{F}_q$.
Let $G$ be a reductive group scheme over $\cal{O}_F$.
Let $\Lambda$ be either $\Qlb$, a finite extension $L$ of $\QQ_{\ell}$, its ring of integers $\cal{O}_L$, or its quotient ring.
By \cite[Theorem I.6.3]{FS}, we have an equivalence of symmetric monoidal categories
\[
\scr{S}_{\FS}\colon \Sat(\Hck_{G,\Div^1}, \Lambda)\overset{\sim}{\longrightarrow} \Rep(\wh{G}^{\rm{tw}}\rtimes W_F,\Lambda)
\]
where $\Sat(\Hck_{G,\Div^1}, \Lambda)$ is the Satake category, which is the monoidal category of perverse sheaves with coefficients in $\Lambda$ on the Hecke stack $\Hck_{G, \Div^1}$ over the moduli space $\Div^1=\Spd \breve{F}/\varphi^{\mathbb{Z}}$ parametrizing degree 1 Cartier divisors on the Fargues--Fontaine curve for $F$.
Let $\wh{G}$ be the Langlands dual group over $\Lambda$ and $W_F$ the Weil group of $F$.
The group scheme $\wh{G}^{\rm{tw}}\rtimes W_F$ is a semidirect product of $\wh{G}$ and $W_F$ with respect to the $W_F$-action on $\wh{G}$ obtained by twisting the usual action by an explicit cyclotomic action.
Then $\Rep(\wh{G}^{\rm{tw}}\rtimes W_F,\Lambda)$ is the monoidal category of $\wh{G}^{\rm{tw}}\rtimes W_F$-representations on finite free $\Lambda$-modules.

Consider a diagram
\[
\xymatrix{
\Hck_{G,\Spd k}\ar[r]^-{i}&\Hck_{G,\Spd \cal{O}_{\wh{\ol{F}}}}&\Hck_{G,\Spd \wh{\ol{F}}}\ar[l]_-{j}\ar[d]^{p_{F}}\\
&&\Hck_{G,\Div^1}, 
}
\]
see \S \ref{scn:notation} for the notations.
We have a nearby cycle functor
\[
\Psi:=i^* \circ Rj_* \circ (p_{F})^*\colon \Sat(\Hck_{G,\Div^1},\Lambda)\to \Sat(\Hck_{G,\Spd k},\Lambda).
\]
The main theorem is the following:
\begin{thm}
There exist a symmetric monoidal strcuture on $\Sat(\Hck_{G,\Spd k},\Lambda)$ whose underlying monoidal structure is given by the convolution product $-\star -$, and a symmetric monoidal equivalence
\[
\scr{S}_{\YZ}\colon \Sat(\Hck_{G,\Spd k},\Lambda)\overset{\sim}{\longrightarrow} \Rep(\wh{G},\Lambda)
\]
such that for the diagram of functors
\[
\xymatrix{
\Sat(\Hck_{G,\Div^1}, \Lambda)\ar[r]^-{\scr{S}_{\FS}}\ar[d]_{\Psi}&\Rep(\wh{G}^{\rm{tw}}\rtimes W_F,\Lambda)\ar[d]^{\rm{For}}\\
\Sat(\Hck_{G,\Spd k},\Lambda)\ar[r]_-{\scr{S}_{\YZ}}&\Rep(\wh{G},\Lambda),
}
\]
there is a natural isomorphism
\[
\rm{For}\circ \scr{S}_{\FS}\cong \scr{S}_{\YZ}\circ \Psi.
\]
\end{thm}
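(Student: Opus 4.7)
The strategy is to construct $\scr{S}_{\YZ}$ by transporting the Tannakian structure from the Fargues--Scholze side through the nearby cycle functor $\Psi$, and then to recognize the resulting Tannakian group as $\wh{G}$. The first step is to show that $\Psi$ is t-exact with respect to the perverse t-structures on both sides and maps $\Sat(\Hck_{G,\Div^1},\Lambda)$ into the category of $L^+G$-equivariant perverse sheaves on $\Hck_{G,\Spd k}$ that will serve as the target $\Sat(\Hck_{G,\Spd k},\Lambda)$. This relies on the fact that the nearby cycle of a ULA sheaf is perverse up to a shift, combined with the ULA property of all objects in $\Sat(\Hck_{G,\Div^1},\Lambda)$ which is built into the Fargues--Scholze setup.

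Next, I would equip $\Psi$ with a monoidal structure with respect to convolution. The convolution product on both Hecke stacks is defined by pushforward along the multiplication map on twisted products, and proper base change together with the compatibility of nearby cycles with proper pushforward of ULA sheaves gives a canonical natural isomorphism $\Psi(A\star B)\cong \Psi(A)\star \Psi(B)$. To upgrade this to a symmetric monoidal structure, one must compare the commutativity constraints, which on the Fargues--Scholze side come from the fusion construction: the convolution Hecke stack over $\Div^2$ deforms to the external product away from the diagonal, producing a canonical braiding on $\Sat(\Hck_{G,\Div^1},\Lambda)$. Defining $\star$ on the special fiber via the nearby cycle of the fusion product over $\Div^2$ both furnishes the sought-after symmetric monoidal structure on $\Sat(\Hck_{G,\Spd k},\Lambda)$ and makes $\Psi$ symmetric monoidal.

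With the monoidal structure in place, the fiber functor on $\Sat(\Hck_{G,\Spd k},\Lambda)$ is defined by total cohomology along the projection to the point. The compatibility of cohomology with proper pushforward and with nearby cycles of ULA sheaves implies that this functor agrees with $\rm{For}\circ \omega_{\FS}\circ \Psi$ on the essential image of $\Psi$, where $\omega_{\FS}$ is the Fargues--Scholze fiber functor. By Tannakian reconstruction the resulting Tannakian group is therefore $\wh{G}$, with the $W_F$-action simply being discarded by $\rm{For}$, and the induced equivalence is defined to be $\scr{S}_{\YZ}$. The commutative diagram then holds tautologically: both compositions around it compute the same symmetric monoidal fiber functor factored through $\Rep(\wh{G},\Lambda)$, up to the prescribed natural isomorphism.

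The main obstacle, as sketched above, is the compatibility of $\Psi$ with the fusion product, i.e. upgrading mere monoidality to symmetric monoidality. This requires extending nearby cycles to families over $\Div^I$ for $|I|\geq 2$ and checking that their formation commutes both with pullback along the various partial diagonals $\Div^J\to \Div^I$ and with specialization to the special fiber. In the $v$-stacky framework of Fargues--Scholze one must also verify universal local acyclicity over the relevant parameter spaces and control perversity along the way; this geometric compatibility, rather than any Tannakian manipulation, is where the bulk of the technical work will lie.
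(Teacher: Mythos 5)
Your plan is broadly aligned with the paper's output but misses its central organizing idea, which drastically simplifies the work you identify as ``the main obstacle.''

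You propose to build the symmetric monoidal structure on $\Sat(\Hck_{G,\Spd k},\Lambda)$ directly, by extending nearby cycles to the fusion families over $\Div^I$ for $|I|\geq 2$ and then checking compatibility with the partial diagonals and with specialization. The paper avoids all of this. Its key input (Theorem \ref{thm:pullackequiv} and Corollary \ref{cor:preserveSat}) is that the composite $i^*Rj_*$ is an \emph{equivalence of categories} $\Sat(\Hck_{G,\Spd C},\Lambda)\xrightarrow{\sim}\Sat(\Hck_{G,\Spd k},\Lambda)$: indeed $j^*$ is an equivalence on ULA objects with quasi-inverse $Rj_*$ (because $Rj_*$ preserves the ULA property, which reduces via hyperbolic localization to constant sheaves being $j_*$-continuous), and $i^*$ is an equivalence on ULA objects by Fargues--Scholze. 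Once $i^*Rj_*$ is an equivalence, one simply \emph{transports} the symmetric monoidal structure --- braiding included --- from the generic fiber to the special fiber along it, and the fiber-functor compatibility $F_{\Spd C}\cong F_{\Spd k}\circ\Psi$ (Theorem \ref{thm:F}, a direct base-change computation) makes this transport monoidal (Corollary \ref{cor:symmonF}). Tannakian reconstruction then gives $\scr{S}_{\YZ}$ immediately. The separate verification $\Psi(A\star B)\cong\Psi(A)\star\Psi(B)$ (Proposition \ref{prop:monoidal}) is still needed to see that the transported tensor product is the convolution $\star$; this matches your second step, and the paper's argument for it is indeed the proper-base-change K\"unneth compatibility you sketch (in the form $R(j_{G^2})_*(A\boxtimes B)\cong Rj_*A\boxtimes Rj_*B$, proved by applying $j_{G^2}^*$ and invoking the equivalence). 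But nothing in the paper touches $\Div^I$ for $|I|\geq 2$ on the integral/special-fiber side, and that is not an incidental shortcut: formulating a nearby-cycle functor for the fusion family over $\Div^2$ down to a ``special fiber'' and proving its compatibilities is substantially harder (essentially a fresh fusion theory à la Zhu/Yu), and the paper's whole point is that the equivalence $i^*Rj_*$ lets you bypass it. In short, your step where you ``define $\star$ on the special fiber via the nearby cycle of the fusion product over $\Div^2$'' is a detour; the correct and much shorter move is to first prove that $i^*Rj_*$ is an equivalence, then transport.
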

The equivalence $\scr{S}_{\YZ}$ with coefficients in $\Qlb$, $\bb{F}_{\ell}$ or $\ZZ_{\ell}$ has the same form as the geometric Satake equivalence in \cite{Zhumixed} or \cite{Yu}.
We do not prove that the equivalence $\scr{S}_{\YZ}$ is the same functor as the equivalence in \cite{Zhumixed} or \cite{Yu}, which is too complicated to prove since the way of constructing a symmetric monoidal structure on Satake categories is quite different between \cite{FS} and \cite{Zhumixed}.
However, we prove that the several properties of the equivalence given in \cite{Zhumixed} also hold for $\scr{S}_{\YZ}$.

The similar results to the above theorem are mentioned in \cite[Remark I.2.14]{FS}.
However, we show further that the nearby cycle functor, involving $Rj_*$ can be used as the vertical functor, while \cite{FS} used $(j^*)^{-1}$ instead.

While we are completing this work, a related paper \cite{AGLR} appeared.
The results of Proposition \ref{prop:perfconstconti}, Theorem \ref{thm:pullackequiv} and Corollary \ref{cor:preserveSat} overlap with \cite[Lemma 6.11, Proposition 6.12, Corollary 6.14]{AGLR}, respectively.
However, our proofs of these results are different from that of \cite{AGLR}.
We give a deeper discussion on the fundamental theory of sheaves on $\Spd k, \Spd \wh{\ol{F}}$, and $\Spd \cal{O}_{\wh{\ol{F}}}$ in \S \ref{ssc:genonetcoh}.
Also, our proofs are simpler in some way than \cite{AGLR} since the result of \cite{AGLR} is more general.
\subsection*{Acknowledgement}
The author is really grateful to his advisor Naoki Imai for useful discussions and comments.
He also thanks to referees for careful reading of the paper and valuable suggestions and comments.
This work is supported by JSPS KAKENHI Grant Number 2KJ1105.
\section{Notations}\label{scn:notation}
For a field $K$, let $\overline{K}$ denote an algebraic closure of $K$.

Throughout this paper, $F$ is a $p$-adic local field with a residue field $\bb{F}_q$.
Write $\cal{O}_F$ for its ring of integers.
Put $k:=\ol{\bb{F}}_q$.

Let $\ell$ be a prime number not equal to $p$.
Let $\Lambda$ be either $\Qlb$, a finite extension $L$ of $\QQ_{\ell}$, its ring of integers $\cal{O}_L$, or its quotient ring.
The category of finite free $\Lambda$-module is denoted by $\rm{Vect}(\Lambda)$.

Put 
\[
\Div^1:=\Spd \breve{F}/\varphi^{\bb{Z}}
\]
where $\breve{F}$ is a maximal unramified extension of $F$, and $\varphi$ is the Frobenius.
Let $G$ be a reductive group over $\Spec \cal{O}_F$.
For a small v-stack $S$ with a map $S\to \Div^1$, or $S\to \Spd \cal{O}_{\breve{F}}$, let $\Gr_{G,S}$ be the Beilinson--Drinfeld affine Grassmannian of $G$ over $S$ and $\Hck_{G,S}$ the local Hecke stack defined in \cite[VI.1]{FS}.

The Langlands dual group of $G$ over $\Lambda$ is denoted by $\wh{G}_{\Lambda}$ or simply $\wh{G}$.
The scheme $\wh{G}$ has a natural $W_F$-action.
We define the category $\Rep(\wh{G},\Lambda)$ as the monoidal category of algebraic $\wh{G}$-representations on finite free $\Lambda$-modules.
We write $\Rep(\wh{G}\rtimes W_F,\Lambda)$ for the category of $\wh{G}\rtimes W_F$-representations on finite free $\Lambda$-modules such that the $\wh{G}$-action is algebraic, and that the $W_F$-action is continuous with respect to the $\ell$-adic topology on $\Lambda$.
The category $\Rep(\wh{G}^{\rm{tw}}\rtimes W_F,\Lambda)$ is similarly defined with respect to the twisted $W_F$-action on $\wh{G}$ appearing in \cite[Theorem I.6.3]{FS}.

If $\Lambda$ is a torsion ring, then the derived category $\Det(X,\Lambda)$ for a small v-stack $X$ and the six functors with respect to this formalism are defined as in \cite[Definition 1.7]{Sch}.
\section{Torsion coefficient case}
Let $\Lambda$ be a proper quotient of $\cal{O}_L$ where $L$ is a finite extension over $\QQ_{\ell}$.
\subsection{Generalities on \'{e}tale cohomology of diamonds}\label{ssc:genonetcoh}
Put $C:=\wh{\ol{F}}$.
\subsubsection{Excision distinguished triangle}
\begin{lemm}\label{lemm:RHomdist}
Let $X$ be a small v-stack.
For $A\in \Det(X,\Lambda)$, the functor
\[
R\HHom_{\Det(X,\Lambda)}(-,A)\colon \Det(X,\Lambda)^{\rm{op}}\to \Det(X,\Lambda)
\]
preserves distinguished triangles.
\end{lemm}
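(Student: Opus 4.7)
The plan is to observe that $R\HHom(-,A)$ is a right adjoint between stable $\infty$-categories, which forces it to be exact, and therefore to preserve distinguished triangles after passing to homotopy categories.

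First I would recall that by \cite[Definition 1.7]{Sch}, $\Det(X,\Lambda)$ is defined as the homotopy category of a closed symmetric monoidal stable $\infty$-category; the internal Hom $R\HHom$ is part of this structure and satisfies the standard tensor-Hom adjunction
\[
\Hom_{\Det(X,\Lambda)}(B\otimes^L C, A)\cong \Hom_{\Det(X,\Lambda)}(B, R\HHom(C, A))\cong \Hom_{\Det(X,\Lambda)}(C, R\HHom(B, A)),
\]
together with the analogous adjunction at the $\infty$-categorical level. In particular, the functor $R\HHom(-, A)\colon \Det(X,\Lambda)^{\mathrm{op}}\to \Det(X,\Lambda)$ is the right adjoint of $R\HHom(-, A)\colon \Det(X,\Lambda)\to \Det(X,\Lambda)^{\mathrm{op}}$.

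Since the opposite of a stable $\infty$-category is stable, and right adjoints between stable $\infty$-categories are exact, the $\infty$-categorical lift of $R\HHom(-, A)$ preserves fiber and cofiber sequences. Passing to homotopy categories, this translates exactly into the statement that, applied to a distinguished triangle $M\to N\to P\to M[1]$, the functor $R\HHom(-, A)$ produces the distinguished triangle $R\HHom(P, A)\to R\HHom(N, A)\to R\HHom(M, A)\to R\HHom(P, A)[1]$.

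The only point actually requiring verification is that the tensor-Hom adjunction exists at the $\infty$-categorical level and not merely on homotopy categories; this is however built into the construction of $\Det(X,\Lambda)$ in \cite{Sch}, where the category is obtained from a closed symmetric monoidal stable $\infty$-category, so no substantial obstacle arises. Alternatively, one could give a direct triangulated argument: complete $R\HHom(P, A)\to R\HHom(N, A)$ to a distinguished triangle with third term $W$, and then use the adjunction together with the fact that $B\otimes^L (-)$ preserves distinguished triangles to identify $W\cong R\HHom(M, A)$ via Yoneda applied to the cohomological functor $\Hom(B,-)$.
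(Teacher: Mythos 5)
Your argument is correct, but it takes a genuinely different route from the paper's. The paper exploits the concrete construction of $R\HHom_{\Det(X,\Lambda)}$ from \cite{Sch}: by \cite[Corollary 17.2]{Sch} the inclusion $\Det(X,\Lambda)\hookrightarrow D(X_v,\Lambda)$ has a right adjoint $R_{X_{\et}}$, and by the remark after \cite[Lemma 17.8]{Sch} one has $R\HHom_{\Det(X,\Lambda)}(-,A)\cong R_{X_{\et}}\circ R\HHom_{D(X_v,\Lambda)}(-,A)$; the first factor preserves distinguished triangles because it is the derived inner hom on the v-site, and the second does so because it is the right adjoint of a triangle-preserving inclusion. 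Your argument instead invokes the tensor-Hom adjunction to exhibit $R\HHom(-,A)\colon \Det(X,\Lambda)^{\mathrm{op}}\to\Det(X,\Lambda)$ as a right adjoint (to itself, viewed in the opposite direction), so that exactness follows from abstract nonsense in stable $\infty$-categories. The trade-off is that the paper's proof is elementary and needs nothing beyond the triangulated-level structure actually set up in \cite{Sch}, where $\Det(X,\Lambda)$ is primarily handled as a triangulated category and the internal Hom is literally defined via the factorization the paper uses; your argument is shorter and more conceptual, but it silently relies on the availability of a symmetric monoidal stable $\infty$-categorical enhancement in which the tensor-Hom adjunction is promoted to the $\infty$-level. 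That enhancement does exist, but citing \cite[Definition 1.7]{Sch} for it is a bit optimistic, since that reference works at the homotopy-category level; you would want to point to where the $\infty$-categorical structure is actually established. Your ``alternative'' purely triangulated argument at the end is the sketch of the standard Yoneda/five-lemma device; it is salvageable, but as stated it glosses over the familiar non-functoriality-of-cones issue (the fill-in map $W\to R\HHom(M,A)$ is obtained from the long exact $\Hom$-sequence and TR3, and one then needs to check the third square of the ladder commutes before applying the five lemma), so if you want to rely on it you should spell that step out.
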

\begin{proof}
Following \cite[Corollary 17.2]{Sch}, write
\[
R_{X_{\et}}\colon D(X_v,\Lambda)\to \Det(X,\Lambda)
\]
for the right adjoint of the inclusion $\Det(X,\Lambda)\hookrightarrow D(X_v,\Lambda)$.
Then as in the remark just after Lemma 17.8 of \cite{Sch}, we have a natural isomorphism
\[
R\HHom_{\Det(X,\Lambda)}(-,A)\cong R_{X_{\et}}R\HHom_{D(X_v,\Lambda)}(-,A)\colon \Det(X,\Lambda)^{\rm{op}}\to \Det(X,\Lambda).
\]
The functor $R\HHom_{D(X_v,\Lambda)}(-,A)$ preserves distinguished triangles since it is the derived functor of the inner hom of v-sheaves.
The functor $R_{X_{\et}}$ preserves distinguished triangles since its left adjoint $\Det(X,\Lambda)\hookrightarrow D(X_v,\Lambda)$ preserves distinguished triangles.
\end{proof}
Let $j\colon \Spd C\to \Spd \cal{O}_C$ and $i\colon \Spd k\to \Spd \cal{O}_C$ be inclusions.
\begin{lemm}
For $A\in \Det(\Spd \cal{O}_C,\Lambda)$,
\begin{align}
&j_!j^*A\to A\to i_*i^*A\to, \label{eqn:disttri}\\
\text{and\quad }&i_*Ri^!A\to A\to Rj_*j^*A\to \label{eqn:disttri2}
\end{align}
are distinguished triangles.
\end{lemm}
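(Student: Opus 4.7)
The plan is to derive both triangles from the standard recollement package for the open--closed decomposition of $\Spd \cal{O}_C$ into its generic point $\Spd C$ and its special point $\Spd k$, using the six-functor formalism for small v-stacks from \cite{Sch}. The structural inputs I would invoke are (i) $j^* j_! \cong \mathrm{id}$ and $j^* Rj_* \cong \mathrm{id}$, which come from $j$ being an open immersion; (ii) $i^* Ri_* \cong \mathrm{id}$ and $Ri^! Ri_* \cong \mathrm{id}$, which come from $i$ being a closed immersion; (iii) the complementarity relations $i^* j_! = 0$ and $Ri^! Rj_* = 0$; and (iv) the standard recollement consequence that the kernel of $j^*\colon \Det(\Spd \cal{O}_C,\Lambda) \to \Det(\Spd C,\Lambda)$ is the essential image of $Ri_*$.

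For \eqref{eqn:disttri}, I would let $Y$ denote the cofiber of the adjunction counit $j_! Rj^* A \to A$. Applying $j^*$ to the resulting triangle and using $j^* j_! \cong \mathrm{id}$ forces $j^* Y = 0$, whence by (iv) the natural map $Y \to Ri_* i^* Y$ is an isomorphism. Applying $i^*$ to the defining triangle and using $i^* j_! = 0$ then yields $i^* Y \cong i^* A$, and combining, $Y \cong Ri_* i^* A$.

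For \eqref{eqn:disttri2}, dually, I would let $X$ be the fiber of the unit $A \to Rj_* j^* A$. The identity $j^* Rj_* \cong \mathrm{id}$ gives $j^* X = 0$, so again $X \cong Ri_* i^* X$. Applying $Ri^!$ to the triangle and using $Ri^! Rj_* = 0$ together with $Ri^! Ri_* \cong \mathrm{id}$, one obtains $i^* X \cong Ri^! A$, so $X \cong Ri_* Ri^! A$ as required.

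The principal thing to verify is that the recollement inputs (i)--(iv) really hold at the level of $\Det(-,\Lambda)$ for the present pair $(i,j)$. In our setting $j$ is the locus in $\Spd \cal{O}_C$ where a pseudouniformizer is invertible (an open immersion of small v-stacks) and $i$ is its complementary closed immersion, and the six-functor package of \cite{Sch} supplies all the required adjoints and identities for such inclusions. The boundedness hypothesis $A \in \Det(\Spd \cal{O}_C,\Lambda)^{bd}$ enters to guarantee that $Rj_*$ and $Ri^!$ stay within $\Det$, so that all fiber and cofiber manipulations above take place inside the correct triangulated category.
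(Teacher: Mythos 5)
Your proposal is a genuinely different route from the paper's. The paper proves \eqref{eqn:disttri} by reducing, via the exactness of $j^*$, $i^*$, $j_!$, $Ri_*$ on $\Det(-,\Lambda)^{bd}$, to the case where $A$ sits in a single degree, and then checking directly that $0\to j_!j^*A\to A\to Ri_*i^*A\to 0$ is exact after applying $i^*$ and $j^*$. It then obtains \eqref{eqn:disttri2} not by a dual recollement argument but by applying $R\HHom(-,A)$ to the constant-sheaf instance $j_!j^*\Lambda\to\Lambda\to Ri_*i^*\Lambda$ of \eqref{eqn:disttri}, using Lemma~\ref{lemm:RHomdist} and the adjunction isomorphisms of \cite[Theorem 1.8(iv),(v)]{Sch} to rewrite the three terms. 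Your version instead runs the full recollement machine for $(\Spd k\hookrightarrow\Spd\cal{O}_C\hookleftarrow\Spd C)$, deriving both triangles from the axioms (i)--(iv).

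The place you should be careful is your input (iv), that $\ker(j^*)$ is the essential image of $Ri_*$. You present it as part of ``the required adjoints and identities'' that \cite{Sch} supplies, but it is not a formal consequence of the adjunctions: it is the genuine recollement gluing property for this open--closed decomposition, and in the diamond setting it is not stated in \cite{Sch} in the form you need. If you unwind it (compare the cofiber of $A\to Ri_*i^*A$ when $j^*A=0$), it reduces to the joint conservativity of $(i^*,j^*)$ on $\Det(\Spd\cal{O}_C,\Lambda)^{bd}$, which is precisely the same input the paper uses in its ``can be checked after taking $i^*$ and $j^*$'' step. So your argument is not circular, but (iv) is where the actual content lives, and it deserves a proof rather than a citation. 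A secondary point: the paper's $R\HHom$ route to \eqref{eqn:disttri2} deliberately avoids manipulating $Ri^!$ directly (everything reduces to the stated formulas for $Ri_*$, $Rj_*$ of \cite[Theorem 1.8]{Sch}), whereas your dual argument needs both $Ri^!Rj_*=0$ and $Ri^!Ri_*\cong\mathrm{id}$ as further unverified inputs, and your stated role for the boundedness hypothesis (keeping $Rj_*$ and $Ri^!$ inside $\Det$) is not quite how the paper uses it --- there it is what permits the reduction to a sheaf concentrated in one degree.
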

\begin{proof}
The functors $j^*, i^*$ are exact (that is, commute with canonical truncations) by definition.
The functor $Rj_!$ is also exact (see \cite[Definition/Proposition 19.1]{Sch}), so $Rj_!=j_!$.
Moreover, the functor $Ri_*$ is exact (by the fact that $\dimtrg i=0$ and \cite[Theorem 22.5]{Sch}, for example), so $Ri_*=i_*$.
Hence it suffices to show that for $A\in \Det(\Spd \cal{O}_C,\Lambda)$ concentrated in degree 0, 
\[
0\to j_!j^*A\to A\to i_*i^*A\to 0
\]
is an exact sequence.
This can be checked after taking $i^*$ and $j^*$, and that is easy.

For (\ref{eqn:disttri2}), by (\ref{eqn:disttri}) and Lemma \ref{lemm:RHomdist}, we have a distinguished triangle
\[
R\HHom(i_*i^*\Lambda, A)\to R\HHom(\Lambda, A)\to R\HHom(Rj_!j^*\Lambda, A)\to.
\]
By \cite[Theorem 1.8(iv), (v)]{Sch}, we have
\begin{align*}
R\HHom(i_*i^*\Lambda, A)&\cong i_*R\HHom(i^*\Lambda, Ri^!A)\cong i_*Ri^!R\HHom(\Lambda,A)\cong i_*Ri^!A\\
R\HHom(\Lambda, A)\cong A\\
R\HHom(j_!j^*\Lambda, A)&\cong Rj_*R\HHom(j^*\Lambda, Rj^!A)\cong Rj_*Rj^!R\HHom(\Lambda,A)\cong Rj_*j^*A,
\end{align*}
hence the lemma follows.
\end{proof}
Let $D(\Lambda\mathchar`-\mathrm{mod})$ be the derived category of $\Lambda$-modules.
\begin{lemm}\label{lemm:Ckconst}
The natural functors
\begin{align*}
D(\Lambda\mathchar`-\mathrm{mod})&\to \Det(\Spd C,\Lambda),\\
D(\Lambda\mathchar`-\mathrm{mod})&\to \Det(\Spd k,\Lambda)
\end{align*}
which maps a complex $K$ of $\Lambda$-module to the constant sheaf associated with $K$, are equivalences.
\end{lemm}
\begin{proof}
The first equivalence follows from the equivalence
\[
\Det(\Spd C,\Lambda)\cong D(|\Spd C|,\Lambda)=D(\Lambda\mathchar`-\mathrm{mod})
\]
since $\Spd C$ is a strictly totally disconnected perfectoid space.
For the second one, the composition
\[
D(\Lambda\mathchar`-\mathrm{mod})\to \Det(\Spd k,\Lambda) \to \Det(\Spd C,\Lambda)
\]
is equivalence by the first one, where the second map is the pullback along the projection $\Spd C\to \Spd k$.
The claim follows from the fact that the second map is fully faithful by \cite[Propositon 19.5(ii)]{Sch}.
\end{proof}
\begin{lemm}\label{lemm:cohon3}
For $K\in D(\Lambda\mathchar`-\mathrm{mod})$, the natural maps
\begin{align*}
K&\to R\Gamma(\Spd k,K),\\
K&\to R\Gamma(\Spd \cal{O}_C,K),\\
K&\to R\Gamma(\Spd C,K)
\end{align*}
are isomorphism in $D(\Lambda\mathchar`-\mathrm{mod})$.
Here we still write $K$ for the constant sheaf associated with $K$.
\end{lemm}
\begin{proof}
The first and third claims follow from Lemma \ref{lemm:Ckconst}.
The second follows from the isomorphism
\[
R\Gamma(\Spd \cal{O}_C,K)\cong R\Gamma(\Spd k,K)
\]
in \cite[Remark V.4.3 (ii)]{FS}.
\end{proof}
\begin{prop}\label{prop:perfconstconti}
For $K\in D(\Lambda\mathchar`-\mathrm{mod})$, the natural map
\[
K\to Rj_*K \in \Det(\Spd \cal{O}_C,\Lambda)
\]
is isomorphism.
Here we still write $K$ for the constant sheaf associated with $K$.
\end{prop}
\begin{proof}
The distinguished triangle
\[
R\Gamma i_*Ri^! K\to R\Gamma K \to R\Gamma Rj_*j^*K\to
\]
induced by (\ref{eqn:disttri2}) is isomorphic to
\[
Ri^! K \to K \overset{\mathrm{id}}{\to} K \to
\]
by Lemma \ref{lemm:cohon3}.
Thus we get $Ri^!K=0$.
Considering the distinguished triangle (\ref{eqn:disttri2}) again, it follows that
\[
K\to Rj_*K
\]
is an isomorphism.
\end{proof}
\begin{cor}\label{cor:equivlc}
There is an equivalence
\[
D_{\rm{lc}}(\Spd k,\Lambda)\underset{i^*}{\overset{\sim}{\leftarrow}} D_{\rm{lc}}(\Spd \cal{O}_C,\Lambda)\underset{j^*}{\overset{\sim}{\to}}D_{\rm{lc}}(\Spd C,\Lambda)\simeq D_{\rm{perf}}(\Lambda\mathchar`-\mathrm{mod}),
\]
where $D_{\rm{lc}}(-,\Lambda)\subset \Det(-,\Lambda)$ denotes the full subcategory locally constant sheaves with perfect fibers, and $D_{\rm{perf}}(\Lambda\mathchar`-\mathrm{mod})$ is the category of perfect $\Lambda$-complexes.

Moreover, the quasi-inverse of the above functor $j^*$ is given by $Rj_*$
\end{cor}
\begin{proof}
The first statement is mentioned in the proof of \cite[Corollary VI.6.7]{FS}.
However, we give a proof:\\
The equivalence $D_{\rm{lc}}(\Spd C,\Lambda)\simeq D_{\rm{perf}}(\Lambda\mathchar`-\mathrm{mod})$ follows from Lemma \ref{lemm:Ckconst}.
By the definition of locally constant sheaves, objects of the three categories of sheaves are constant sheaves.
Thus it suffices to show that $i^*$ and $j^*$ are fully faithful.
Since $\Hom(K_1,K_2)=\Hom(\Lambda,\HHom(K_1,K_2))$ holds and $\HHom(K_1,K_2)$ is constant if $K_1,K_2$ are constant, this follows from Lemma \ref{lemm:cohon3}.

The second statement follows from Proposition \ref{prop:perfconstconti}.
\end{proof}
\subsection{Nearby cycle functor}\label{ssc:nearby}
For a small v-stack $S$ over $\Div^1$ or $\Spd \cal{O}_{\breve{F}}$, we refer to \cite[Definition VI.6.1]{FS} for the notion of universally locally acyclic (ULA) sheaves on $\Hck_{G,S}$ over a torsion coefficient ring $\Lambda$.
\begin{prop} \label{prop:ULAprsv}
If $K\in \Det(\Hck_{G,\Spd C},\Lambda)$ is ULA over $\Spd C$, then $Rj_*K\in \Det(\Hck_{G,\Spd \cal{O}_C},\Lambda)$ is ULA over $\Spd \cal{O}_C$.
\end{prop}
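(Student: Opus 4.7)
The plan is to verify ULA by exploiting the simple two-stratum geometry of $\Spd \cal{O}_C$ (with generic point $\Spd C$ and closed point $\Spd k$) together with the pushforward result for constant sheaves established in Lemma \ref{lemm:perfconstconti}. First I would reduce to a bounded piece: boundedness of $K$ implies its support lies in some Schubert cell $\Hck_{G,\Spd C}^{\leq \mu}$, so it suffices to treat $Rj_*K$ as a sheaf on the closed substack $\Hck_{G, \Spd \cal{O}_C}^{\leq \mu}$, which is proper over $\Spd \cal{O}_C$ and therefore behaves well under the six-functor formalism.

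Next I would unpack what ULA of $Rj_*K$ over $\Spd \cal{O}_C$ amounts to concretely. The restriction to the generic fiber recovers $K$, which is ULA by hypothesis. The real content is to show that $i^* Rj_* K$ is ULA over $\Spd k$ and that the relevant base change morphisms are isomorphisms for arbitrary $S' \to \Spd \cal{O}_C$. Using the excision triangle \eqref{eqn:disttri} (which decomposes any sheaf on $\Spd \cal{O}_C$ in terms of its restrictions to the two strata) together with proper base change for $i_*$ and $j_!$, the question reduces to whether $Rj_*$ itself commutes with arbitrary base change for our ULA input on the generic fiber.

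To settle this I would locally reduce to constant sheaves with perfect fiber on the Schubert cell. After pulling back along a smooth atlas arising from the scheme-theoretic local model (using the comparison results of Propositions \ref{prop:cXcomp}--\ref{prop:cXbc}), an ULA sheaf can be described in terms of shifted constant sheaves with perfect fiber on strata. For such a constant sheaf, Lemma \ref{lemm:perfconstconti} says that the canonical map $K \to Rj_* j^*K$ is an isomorphism, which simultaneously delivers base-change compatibility with any $S' \to \Spd \cal{O}_C$ and shows that the special-fiber restriction of $Rj_*K$ is constant (hence ULA). Assembling these pieces via the excision triangle on the target side then gives ULA of $Rj_*K$.

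The main obstacle will be the local structure step: establishing rigorously that ULA sheaves on a bounded Schubert cell in the diamond setting reduce, after a suitable smooth cover, to constant sheaves with perfect fiber. This is where the comparison machinery with schemes developed earlier in this subsection becomes essential, since the analogous statement on the scheme side is more tractable and can then be transferred through $(-)^{\dia}$, provided one keeps careful track of how $c^*$ intertwines ULA with its scheme-theoretic counterpart.
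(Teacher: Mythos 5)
Your endgame (invoking Lemma \ref{lemm:perfconstconti}) is the right one, but the route you take to reduce to it has a genuine gap, and it is not the route the paper takes.

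The step ``after pulling back along a smooth atlas arising from the scheme-theoretic local model, an ULA sheaf can be described in terms of shifted constant sheaves with perfect fiber on strata'' is not a correct characterization of ULA objects on $\Hck_{G,\Spd C}$ over $\Spd C$, and nothing in Propositions \ref{prop:cXcomp}--\ref{prop:cXbc} delivers it. Being ULA over the base is not the same as being stratum-wise (locally) constant; the two notions agree only after one has already killed the transversal geometry. The paper instead invokes the hyperbolic localization criterion of \cite[VI.6.4]{FS}: after reducing to $G$ split and fixing $T\subset B\subset G$, a bounded complex $A$ on $\Hck_{G,S}$ is ULA over $S$ if and only if $\CT_B(A)$ on $\Hck_{T,S}$ is ULA over $S$. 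Because $\CT_B$ is compatible with $Rj_*$, this reduces the whole statement to the case $G=T$. Then $\Hck_{T,S}\cong X_*(T)\times S$, and it is only at this point, for the structure map $S\to S$, that \cite[IV.2.9]{FS} identifies ULA with ``locally constant with perfect fibers,'' so that Lemma \ref{lemm:perfconstconti} finishes the job. Without the hyperbolic localization step you have no mechanism to see ULA objects as constant ones, and the excision/base-change bookkeeping you propose does not substitute for it. You should also note that your excision-based unpacking of ULA (``base change morphisms are isomorphisms for arbitrary $S'\to\Spd\cal{O}_C$'') is not the definition being used and adds complexity that the $\CT_B$ reduction avoids entirely.
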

\begin{proof}
Since we are working over $\Spd \cal{O}_C$, we may assume that $G$ is split.
Fix a maximal split torus $T\subset G$ and a Borel subgroup $B\subset G$ containing $T$.
By \cite[Proposition VI.6.4]{FS}, for a small v-stack $S$ over $\Div^1$ or $\Spd \cal{O}_{\breve{F}}$ and $A\in \Det(\Hck_{G,S},\Lambda)^{\bd}$, the complex $A$ is ULA over $S$ if and only if
\[
R\pi_{T,S*}\CT_{B}(A)\in \Det(S,\Lambda)^{\bd}
\]
is locally constant with perfect fibers, where $\pi_{T,S}\colon \Gr_{T,S}\to S$ is the projection and $\CT_B$ is the hyperbolic localization functor.
Since $\CT_B$ can be described as a composition of $(-)_*$ and $(-)^!$ by \cite[Theorem VI.6.5]{FS}, $\CT_B$ is compatible with $Rj_*$.

Thus it suffices to show that if $K\in \Det(\Spd C,\Lambda)$ is locally constant with perfect fibers, then $Rj_*K \in \Det(\Spd \cal{O}_C,\Lambda)$ is locally constant with perfect fibers.
This follows from Corollary \ref{cor:equivlc}.
\end{proof}
\begin{thm} \label{thm:pullackequiv}
The pullback functor
\[
j^*\colon \Det^{\ULA}(\Hck_{G,\Spd \cal{O}_C},\Lambda)^{\bd}\to \Det^{\ULA}(\Hck_{G,\Spd C},\Lambda)^{\bd}
\]
is an equivalence of categories, and its quasi-inverse is $Rj_*$.
\end{thm}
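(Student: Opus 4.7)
The plan is to verify that $j^*$ and $Rj_*$ are mutually quasi-inverse on the indicated ULA categories. Proposition \ref{prop:ULAprsv} ensures that $Rj_*$ does restrict to a functor $\Det^{\ULA}(\Hck_{G,\Spd C},\Lambda)^{\bd}\to \Det^{\ULA}(\Hck_{G,\Spd \cal{O}_C},\Lambda)^{\bd}$, so both functors are well-defined between the stated categories. Since $j$ is an open immersion, the counit $j^*Rj_*\Rightarrow \mathrm{id}$ is automatically a natural isomorphism, which takes care of essential surjectivity and one half of the adjunction. The substantive task is therefore to show that the unit $K\to Rj_*j^*K$ is an isomorphism for every ULA object $K$ on $\Hck_{G,\Spd\cal{O}_C}$.

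To handle this, I would reduce to the case of a split torus by exactly the device used in the proof of Proposition \ref{prop:ULAprsv}. By base change, the hyperbolic localization functor $\CT_B$ commutes with both $j^*$ and $Rj_*$, so $\CT_B$ carries the unit of $K$ to the unit of $\CT_B(K)$. The cone of the unit is itself ULA (being the cone of a map between ULA objects), and $\CT_B$ of an ULA object remains ULA by \cite[VI.6.4]{FS}; combined with the conservativity of $\CT_B$ on ULA objects, it then suffices to prove the theorem for $G=T$.

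In the torus case, $\Hck_{T,\Spd\cal{O}_C}\cong X_*(T)\times \Spd\cal{O}_C$ is a disjoint union of copies of $\Spd\cal{O}_C$ indexed by $X_*(T)$. By \cite[IV.2.9]{FS}, ULA objects on this stack are exactly locally constant complexes with perfect fibers on each component, and on each such component the desired isomorphism $K\to Rj_*j^*K$ is precisely the content of Lemma \ref{lemm:perfconstconti}. Assembling these pieces across all components of $X_*(T)$ gives the full statement.

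The hard part will be the conservativity step in the reduction to the torus: showing that an ULA object annihilated by $\CT_B$ must itself vanish. Compatibility of $\CT_B$ with $j^*$ and $Rj_*$ follows formally from proper base change in the fiber direction, and the torus case is a direct application of Lemma \ref{lemm:perfconstconti}, but this conservativity input has to be extracted carefully from the hyperbolic-localization and Satake machinery of \cite{FS}.
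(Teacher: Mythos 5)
Your argument is correct in outline but takes a genuinely different (and noticeably harder) route than the paper. The paper's proof is essentially a two-line reduction: it cites \cite[Corollary VI.6.7]{FS} directly for the fact that $j^*$ is an equivalence of ULA categories, and then observes that since $Rj_*$ lands in the ULA category (Proposition \ref{prop:ULAprsv}) and satisfies $j^*Rj_*=\mathrm{id}$, it must agree with the quasi-inverse of $j^*$ by the formal identity $Rj_* = (j^*)^{-1}j^*Rj_* = (j^*)^{-1}$. You instead bypass \cite[Corollary VI.6.7]{FS} and re-prove essential surjectivity and fully faithfulness of $j^*$ from scratch, reducing to the torus case via hyperbolic localization and invoking Lemma \ref{lemm:perfconstconti} there; this is structurally parallel to the paper's proof of Proposition \ref{prop:ULAprsv}, but it needs a strictly stronger input at the reduction step.

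That stronger input is precisely what you flag at the end: you need $\CT_B$ to be conservative on ULA objects (so that the cone of the unit, being ULA with vanishing constant terms, vanishes), not merely the ``ULA iff $\CT_B$ is ULA'' characterization of \cite[Proposition VI.6.4]{FS} that suffices for Proposition \ref{prop:ULAprsv}. Conservativity of the hyperbolic localization on ULA complexes is indeed available in \cite{FS}, but it is not the statement of \cite[VI.6.4]{FS} and would have to be cited or extracted separately. Since this conservativity is essentially one of the key ingredients in the proof of \cite[Corollary VI.6.7]{FS}, your route amounts to re-deriving that corollary rather than using it. The net effect is that both proofs are valid, but the paper's is substantially shorter because it leverages \cite[Corollary VI.6.7]{FS} as a black box, while yours trades that citation for a self-contained but longer argument whose missing piece (conservativity) you correctly identify as the part requiring care.
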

\begin{proof}
The fact that $j^*$ is an equivalence is proved in \cite[Corollary VI.6.7]{FS}.
By Proposition \ref{prop:ULAprsv} and the identity $j^*Rj_*=\mathrm{id}$, its quasi-inverse is $Rj_*$.
\end{proof}
\begin{cor}\label{cor:preserveSat}
Consider the diagram
\[
\xymatrix{
\Hck_{G,\Spd k}\ar[r]^-{i}&\Hck_{G,\Spd \cal{O}_{\wh{\ol{F}}}}&\Hck_{G,\Spd \wh{\ol{F}}}\ar[l]_-{j}\ar[d]^{p_{F}}\\
&&\Hck_{G,\Div^1}.
}
\]
The functors $(p_F)^*$, $Rj_*$, $i^*$ preserves the objects of the Satake categories.
Moreover, $Rj_*$ and $i^*$ induce equivalences
\begin{align*}
Rj_*&\colon \Sat(\Hck_{G, \Spd C},\Lambda) \overset{\sim}{\to} \Sat(\Hck_{G, \Spd \cal{O}_C},\Lambda),\\
i^*&\colon \Sat(\Hck_{G, \Spd \cal{O}_C},\Lambda) \overset{\sim}{\to} \Sat(\Hck_{G, \Spd k},\Lambda).
\end{align*}
\end{cor}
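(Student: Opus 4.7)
The corollary consists of two parts: (i) preservation of Satake objects by the three functors $(p_F)^*$, $Rj_*$, $i^*$; (ii) the claim that $Rj_*$ and $i^*$ induce equivalences between the corresponding Satake categories. The plan is to extract the ULA halves of each assertion from Proposition \ref{prop:ULAprsv} and Theorem \ref{thm:pullackequiv}, while handling fiberwise perversity and the equivalence induced by $i^*$ as separate issues.

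\textbf{Preservation.} The map $\Spd \wh{\ol F} \to \Div^1 = \Spd \breve F/\varphi^{\bb Z}$ is a pro-\'{e}tale $W_F$-cover, so $p_F$ is itself a pro-\'{e}tale cover of Hecke stacks; such pullback preserves ULA and is t-exact for the fiberwise perverse t-structure. For $Rj_*$, the ULA property is Proposition \ref{prop:ULAprsv}; fiberwise perversity of $Rj_*A$ holds because the restriction to $\Spd \wh{\ol F}$-fibers recovers $A$ (perverse by hypothesis), while the restriction to the $\Spd k$-fiber is the nearby cycle $i^*Rj_*A$, which preserves perversity on ULA perverse inputs. For $i^*$, the ULA property is stable under base change along the closed immersion $\Spd k \hookrightarrow \Spd \cal{O}_{\wh{\ol F}}$, and perversity on the $\Spd k$-fibers of $i^*A$ is contained in the Satake hypothesis on $A$.

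\textbf{Equivalences.} The equivalence $Rj_*\colon \Sat(\Hck_{\Spd \wh{\ol F}},\Lambda) \xrightarrow{\sim} \Sat(\Hck_{\Spd \cal{O}_{\wh{\ol F}}},\Lambda)$ is then immediate: Theorem \ref{thm:pullackequiv} gives quasi-inverse equivalences $j^*, Rj_*$ on ULA categories, and the preservation step shows both functors restrict to the Satake full subcategories. For the second equivalence, composing with the first one reduces the problem to showing that the nearby cycle functor
\[
\Psi = i^* \circ Rj_* \colon \Sat(\Hck_{\Spd \wh{\ol F}},\Lambda) \longrightarrow \Sat(\Hck_{\Spd k},\Lambda)
\]
is an equivalence.

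\textbf{Main obstacle.} This last step is the technical heart of the argument. The approach is to exploit the Schubert stratification of $\Hck_G$ by closed Schubert cells indexed by dominant coweights $\mu$, which is defined uniformly over $\Spd \cal{O}_{\wh{\ol F}}$ with closed strata proper and flat over the base. The IC sheaves $IC_\mu$ of these closed Schubert strata generate the Satake category on each base (local systems on the smooth strata trivialize since the bases are simply connected in the relevant sense), and standard compatibility of IC sheaves with proper base change gives $i^* IC_\mu^{\Spd \cal{O}_{\wh{\ol F}}} \cong IC_\mu^{\Spd k}$ and similarly for $j^*$, hence $\Psi(IC_\mu^{\Spd \wh{\ol F}}) \cong IC_\mu^{\Spd k}$. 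Combined with the $Rj_*$-equivalence this yields a bijection on simple objects, and one completes the argument by comparing Hom and Ext spaces, which transfer across bases through the ULA formalism.
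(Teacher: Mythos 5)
Your argument for the $Rj_*$ preservation step is circular. To show that $Rj_*A$ is fiberwise perverse over $\Spd \cal{O}_C$ you restrict to the closed fiber $\Spd k$, obtaining $i^*Rj_*A=\Psi(A)$, and then you invoke that the nearby cycle ``preserves perversity on ULA perverse inputs.'' In the present diamond/v-stack formalism this is precisely one of the claims the corollary is meant to establish, not an available blackbox. The paper avoids this by reducing to the torus via hyperbolic localization: by the criterion of Fargues--Scholze (cited in the paper as \cite[Proposition VI.6.4, Proposition VI.7.7]{FS}), an ULA object $A$ lies in the Satake category if and only if $R\pi_{T,S*}\CT_B(A)[\deg]$ is \'etale-locally a finite projective $\Lambda$-module concentrated in degree $0$; since $Rj_*$ commutes with $R\pi_{T,S*}\CT_B(-)[\deg]$, the question collapses to whether $Rj_*$ preserves such complexes over the base $\Spd\cal{O}_C$ itself, which is exactly Lemma~\ref{lemm:perfconstconti}. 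The perversity half of the $i^*$-equivalence is then handled the same way.

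For the equivalence induced by $i^*$ you propose a genuinely different route through Schubert stratifications and IC sheaves. The paper instead first invokes the analogue of \cite[Corollary VI.6.7]{FS} to obtain that $i^*$ is already an equivalence on $\Det^{\ULA}$, and then only needs to check that the perversity condition is preserved and reflected---again by $\CT_B$. Your sketch has several unjustified steps. The claim $i^*\mathrm{IC}_\mu^{\Spd\cal{O}_C}\cong \mathrm{IC}_\mu^{\Spd k}$ is not ``standard compatibility of IC with proper base change'': intersection complexes are not preserved by pullback in general, and what you actually need---that the Schubert closures are equisingular in the $\cal{O}_C$-family so that their IC sheaves are ULA and hence commute with specialization---is a nontrivial geometric input, precisely the kind of thing the hyperbolic localization reduction is designed to circumvent. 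You also assert simple connectedness of the open strata and ``transfer of Hom and Ext across bases'' without argument, and even granting these you would still need essentially the same constant-term comparison to carry out the Hom/Ext step. The paper's route is both more economical and, within the framework it has already set up, more rigorous.
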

\begin{proof}
Since we are considering the relative local acyclicity and the relative perversity, $i^*, j^*, (p_F)^*$ preserves ULA objects and flat perverse (i.e. for all $\Lambda$-modules $M$, $A\otimes^{\bb{L}}_{\Lambda}M$ is perverse) objects.

Let us prove that $Rj_*$ preserves the objects of the Satake categories.
Since we are working over $\Spd \cal{O}_C$, we may assume that $G$ is split.
Fix a maximal split torus $T\subset G$ and a Borel subgroup $B\subset G$ containing $T$.
The pushforward $Rj_*$ commutes with $R\pi_{T,S*}\CT_B(-)[\deg]$ where $\pi_{T,S}\colon \Gr_{T,S}\to S$ is the projection.
Hence by \cite[Proposition VI.6.4, Proposition VI.7.7]{FS}, we need to show that if $A\in \Det(\Spd C,\Lambda)$ is \'{e}tale locally a finite free $\Lambda$-module in degree 0, then so is $Rj_*A\in \Det(\Spd\cal{O}_C,\Lambda)$.
This follows from Proposition \ref{prop:perfconstconti} as in the proof of Proposition \ref{prop:ULAprsv}.

Moreover, $Rj_*$ induces an equivalence since $j^*$ gives the quasi-inverse.
It remains to show that $i^*$ induces an equivalence.
As in \cite[Corollary VI.6.7]{FS}, 
\[
i^*\colon \Det^{\ULA}(\Hck_{G, \Spd \cal{O}_C},\Lambda) \overset{\sim}{\to} \Det^{\ULA}(\Hck_{G, \Spd k},\Lambda)
\]
is an equivalence.
By \cite[Proposition VI.7.7]{FS}, an object $A\in \Det^{\ULA}(\Hck_{S},\Lambda)$ is an object of $\Sat(\Hck_{S},\Lambda)$ if and only if $R\pi_{T,S*}\CT_B(A)[\deg]$ is locally constant with finite free fibers concentrated in degree 0.
Thus, it suffices to show that for $A\in D_{\rm{lc}}(\Spd\cal{O}_C,\Lambda)$, the object $i^*A\in \Det(\Spd k,\Lambda)$ is locally constant with finite free fibers concentrated in degree 0 if and only if so is $A$.
This follows from Corollary \ref{cor:equivlc}
\end{proof}
Therefore, we have a functor
\begin{equation}\label{defofPsitor}
\Psi:=i^* Rj_* (p_F)^*\colon \Sat(\Hck_{G, \Div^1},\Lambda)\to \Sat(\Hck_{G, \Spd k},\Lambda)
\end{equation}
called a nearby cycle functor.
\subsection{Nearby cycle functor and convolution}\label{ssc:monoidal}
For a v-stack $S$ over $\Div^1$ or $\Spd \cal{O}_{\breve{F}}$, there is a monoidal structure on
\[
\Sat(\Hck_{G,S},\Lambda)
\]
given by the convolution product $-\star -$ defined in \cite[VI.8]{FS}.
In this subsection, we prove that
\begin{prop}\label{prop:monoidal}
Consider the nearby cycle functor
\[
\Psi:=i^* Rj_* (p_F)^*\colon \Sat(\Hck_{G,\Div^1},\Lambda)\to \Sat(\Hck_{G,\Spd k},\Lambda).
\]
There is a natural isomorphism
\[
\Psi(A\star B)\cong \Psi(A)\star \Psi(B)
\]
for $A,B\in \Sat(\Hck_{G,\Div^1},\Lambda)$.
\end{prop}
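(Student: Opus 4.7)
The plan is to prove the proposition by showing that each of the three functors appearing in $\Psi = i^* \circ Rj_* \circ (p_F)^*$ is monoidal with respect to the convolution product $\star$, and then composing the resulting natural isomorphisms.

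First I would verify monoidality of the three $*$-pullback functors $(p_F)^*$, $j^*$, and $i^*$. Recall that $A \star B$ is constructed in \cite[VI.8]{FS} as the pushforward $Rm_{S*}$ of a twisted external product $A \wbtimes B$, where $m_S$ is the ind-proper multiplication map on the convolution Hecke stack over $S$ and $\wbtimes$ is built via smooth descent from the torsor $\Hck_{G,S}^{(2)} \to \Hck_{G,S} \times_S \Hck_{G,S}$. Each of these ingredients is manifestly functorial in $S$: along a morphism $S' \to S$, the convolution Hecke stack, the multiplication map, and the torsor all base change correctly. Combining this with proper base change for $m_S$ (applicable on ULA Satake objects, which have quasi-compact support) and the compatibility of smooth descent with base change (which controls $\wbtimes$), one obtains natural isomorphisms $f^*(A \star B) \cong f^*A \star f^*B$ for each of the pullback functors under consideration.

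Next, the monoidality of $Rj_*$ will be an abstract consequence of Corollary \ref{cor:preserveSat}, which provides a mutually quasi-inverse pair of equivalences $j^* \dashv Rj_*$ between $\Sat(\Hck_{G,\Spd \cal{O}_C},\Lambda)$ and $\Sat(\Hck_{G,\Spd C},\Lambda)$. Since $j^*$ is monoidal by the previous step, its quasi-inverse $Rj_*$ inherits a canonical monoidal structure, realized for $A, B \in \Sat(\Hck_{G,\Spd C},\Lambda)$ by the chain
\[
Rj_*(A \star B) \cong Rj_*\bigl(j^*Rj_*A \star j^*Rj_*B\bigr) \cong Rj_*j^*(Rj_*A \star Rj_*B) \cong Rj_*A \star Rj_*B,
\]
where the outer isomorphisms use the unit and counit of the adjunction being isomorphisms on Satake categories, and the middle one uses monoidality of $j^*$.

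Composing the three monoidal structures produces a monoidal structure on $\Psi$, which gives the required isomorphism $\Psi(A \star B) \cong \Psi(A) \star \Psi(B)$. The main obstacle I foresee is in the first step: verifying that $*$-pullbacks commute with the twisted external product $\wbtimes$. Since $\wbtimes$ is defined by smooth descent from the torsor $\Hck_{G,S}^{(2)} \to \Hck_{G,S} \times_S \Hck_{G,S}$ rather than directly, this requires a careful unwinding of the construction in \cite[VI.8]{FS} and checking that the descent data behaves correctly under base change. The rest of the argument is a combination of proper base change and standard categorical manipulations with monoidal equivalences.
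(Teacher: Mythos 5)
Your proposal shares with the paper the overall strategy of decomposing $\Psi = i^* \circ Rj_* \circ (p_F)^*$ and handling the three factors separately, and your mechanism for $Rj_*$ (a quasi-inverse of a monoidal equivalence is monoidal, via the chain $Rj_*(C\star D)\cong Rj_*(j^*Rj_*C \star j^*Rj_*D) \cong Rj_*j^*(Rj_*C\star Rj_*D)\cong Rj_*C\star Rj_*D$) is conceptually the same trick the paper uses in Proposition \ref{prop:pullbackequivGtimesG}. But the two arguments apply that trick at a different level. The paper sets up the explicit diagram of convolution Hecke stacks $\widetilde{\Hck}_{G,S}$ over $\Div^1$, $\Spd C$, $\Spd\cal{O}_C$, $\Spd k$, uses proper base change and the torsor property of the map $a_{\cal{O}_C}$ to rewrite $\Psi(A\star B)$ entirely in terms of \emph{external} products $\boxtimes$ on $\Hck_{G,S}\times_S \Hck_{G,S}$, and then proves a K\"{u}nneth formula $R(j_{G^2})_*(A\boxtimes B)\cong Rj_*A \boxtimes Rj_*B$ by applying the equivalence $j_{G^2}^*$ on ULA objects. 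Working with $\boxtimes$ rather than $\star$ is a deliberate simplification: the external product of ULA objects is easily seen to be ULA (the paper checks this via hyperbolic localization, reducing to the torus case), so the equivalence from Proposition \ref{prop:pullbackequivGtimesG} applies without further ado.

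Your version instead applies the monoidal-quasi-inverse argument directly to the convolution product $\star$ over $\Spd\cal{O}_C$. This is cleaner to state, but it has a gap: for the last isomorphism $Rj_*j^*(Rj_*C\star Rj_*D)\cong Rj_*C\star Rj_*D$ you need $Rj_*C\star Rj_*D$ to be in $\Det^{\ULA}(\Hck_{G,\Spd\cal{O}_C},\Lambda)$ (so that the unit of the adjunction is an isomorphism on it), i.e., you need to know \emph{a priori} that the convolution $\star$ preserves ULA objects over the base $\Spd\cal{O}_C$. The Fargues--Scholze proof of this for bases over $\Div^1$ does transfer, but this is a nontrivial point: it requires knowing that the twisted external product $\wbtimes$ preserves ULA over $\Spd\cal{O}_C$ and that ind-proper pushforward preserves ULA, which is exactly the kind of careful unwinding you flag at the end as the ``main obstacle.'' The paper's route through $\boxtimes$ sidesteps this entirely. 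You should also make explicit the intermediate claim that $\Sat(\Hck_{G,\Spd\cal{O}_C},\Lambda)$ is closed under $\star$ (or at least that $\Det^{\ULA}$ is), since that is needed for the transfer argument to be well-posed; Corollary \ref{cor:preserveSat} tells you the equivalences preserve Satake objects, but not directly that $\star$ does. Finally, your first step --- monoidality of $(p_F)^*$, $j^*$, $i^*$ with respect to $\star$ --- requires exactly the base-change-on-the-convolution-diagram computation that the paper performs; you acknowledge this but do not carry it out, whereas the paper at least writes down the diagram and names the base change results that enter. So the overall plan is sound and genuinely reorganizes the paper's argument, but the gap about $\star$ preserving ULA over $\Spd\cal{O}_C$ needs to be closed before the argument is complete.
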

Applying Theorem \ref{thm:pullackequiv} to a reductive group $G\times_{\cal{O}_F}G$, noting that there is a canonical isomorphism
\[
\Hck_{G,S}\times_S\Hck_{G,S}\cong \Hck_{G\times_{\cal{O}_F} G,S},
\]
we have the following proposition:
\begin{prop}\label{prop:pullbackequivGtimesG}
Let $j_{G^2}\colon \Hck_{G,\Spd C}\times_{\Spd C} \Hck_{G,\Spd C} \to \Hck_{G,\Spd \cal{O}_C}\times_{\Spd \cal{O}_C} \Hck_{G,\Spd \cal{O}_C}$ be the natural inclusion.
Then 
\begin{align*}
j_{G^2}^*\colon &\Det^{\ULA}(\Hck_{G,\Spd \cal{O}_C}\times_{\Spd \cal{O}_C} \Hck_{G,\Spd \cal{O}_C},\Lambda)\\
&\to \Det^{\ULA}(\Hck_{G,\Spd C}\times_{\Spd C} \Hck_{G,\Spd C},\Lambda)
\end{align*}
is an equivalence of categories, and its quasi-inverse is $Rj_{G^2*}$.
\end{prop}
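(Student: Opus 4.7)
The plan is to deduce Proposition \ref{prop:pullbackequivGtimesG} essentially verbatim from Theorem \ref{thm:pullackequiv} by applying that theorem to the reductive group scheme $G \times_{\cal{O}_F} G$ over $\cal{O}_F$, rather than to $G$ itself. This is clean because $G \times_{\cal{O}_F} G$ is again a reductive group scheme over $\cal{O}_F$ (products of reductive groups are reductive), so Theorem \ref{thm:pullackequiv} applies to it with no modification.

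First, I would invoke the canonical isomorphism
\[
\Hck_{G,S}\times_S\Hck_{G,S}\cong \Hck_{G\times_{\cal{O}_F} G,S}
\]
for $S = \Spd \cal{O}_C$ and for $S = \Spd C$; this holds functorially and is recalled just before the proposition. I would then check that under these two isomorphisms the inclusion $j_{G^2}$ corresponds to the map $j\colon \Hck_{G\times G,\Spd C}\to \Hck_{G\times G,\Spd \cal{O}_C}$ obtained by base change of $j\colon \Spd C\to \Spd \cal{O}_C$ along the structure map, since base change commutes with the fiber product on both sides. With this identification both the source and target of $j_{G^2}^*$ match those of the pullback $j^*$ appearing in Theorem \ref{thm:pullackequiv} for the group $G\times_{\cal{O}_F} G$.

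Having made this identification, the conclusion that $j_{G^2}^*$ is an equivalence of categories with quasi-inverse $Rj_{G^2 *}$ follows immediately from Theorem \ref{thm:pullackequiv} applied to $G\times_{\cal{O}_F} G$, since that theorem asserts exactly this for the Hecke stack of any reductive group over $\cal{O}_F$. In particular the identification $j_{G^2}^* Rj_{G^2 *}\cong \mathrm{id}$ and Proposition \ref{prop:ULAprsv} for $G\times_{\cal{O}_F} G$ (which is the input used in the proof of Theorem \ref{thm:pullackequiv}) together give both the equivalence and the description of the quasi-inverse.

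Since this argument is really just a substitution of reductive groups, there is no serious obstacle. The only step that requires a moment of care is verifying the compatibility of the isomorphism $\Hck_{G,S}\times_S\Hck_{G,S}\cong \Hck_{G\times_{\cal{O}_F} G,S}$ with base change in $S$, so that the various $j$'s really do match up; once that is in place, the result is formal.
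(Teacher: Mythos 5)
Your proposal matches the paper's own argument exactly: the paper also deduces Proposition \ref{prop:pullbackequivGtimesG} by applying Theorem \ref{thm:pullackequiv} to the reductive group $G\times_{\cal{O}_F}G$, using the canonical isomorphism $\Hck_{G,S}\times_S\Hck_{G,S}\cong \Hck_{G\times_{\cal{O}_F} G,S}$. The extra care you take in checking that the inclusions $j_{G^2}$ and $j$ correspond under this identification is implicit in the paper but is a reasonable point to make explicit.
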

\begin{proof}[Proof of Proposition \ref{prop:monoidal}]
For a small v-stack $S\to \Div^1_{\cal{Y}}$, put 
\[
\widetilde{\Hck}_{G,S}:=L^+_{S}G\backslash L_{S}G\times^{L^+_{S}G}L_{S}G/L^+_{S}G,
\]
Consider the diagram
\[
\xymatrix{
\Hck_{G,\Div^1}\times_{\Div^1} \Hck_{G,\Div^1}\ar@{<-}[r]^-{a_{\Div^1}}\ar@{<-}[d]_{p_{F,G^2}}\ar@{}[rd]|{\square}&\widetilde{\Hck}_{G,\Div^1}\ar[r]^-{b_{\Div^1}}\ar@{<-}[d]\ar@{}[rd]|{\square}&\Hck_{G,\Div^1}\ar@{<-}[d]^{p_F}\\
\Hck_{G,\Spd C}\times_{\Spd C} \Hck_{G,\Spd C}\ar@{<-}[r]^-{a_{C}}\ar[d]_{j_{G^2}}\ar@{}[rd]|{\square}&\widetilde{\Hck}_{G,\Spd C}\ar[r]^-{b_{C}}\ar[d]\ar@{}[rd]|{\square}&\Hck_{G,\Spd C}\ar[d]^j\\
\Hck_{G,\Spd \cal{O}_C}\times_{\Spd \cal{O}_C} \Hck_{G,\Spd \cal{O}_C}\ar@{<-}[r]^-{a_{\cal{O}_C}}\ar@{<-}[d]_{i_{G^2}}\ar@{}[rd]|{\square}&\widetilde{\Hck}_{G,\Spd \cal{O}_C}\ar[r]^-{b_{\cal{O}_C}}\ar@{<-}[d]\ar@{}[rd]|{\square}&\Hck_{G,\Spd \cal{O}_C}\ar@{<-}[d]^{i}\\
\Hck_{G,\Spd k}\times_{\Spd k} \Hck_{G,\Spd k}\ar@{<-}[r]^-{a_{k}}&\widetilde{\Hck}_{G,\Spd k}\ar[r]^-{b_{k}}&\Hck_{G,\Spd k}.
}
\]
Here the functor
\[
a_S:\widetilde{\Hck}_{G,S}\to \Hck_{G,S}\times_{S} \Hck_{G,S}
\]
is induced from the natural projection $L_{S}G\times_S L_{S}G\to \Hck_{G,S}\times_{S} \Hck_{G,S}$, and 
\[
b_S:\widetilde{\Hck}_{G,S}\to \Hck_{G,S}
\]
is induced from the multiplication $L_{S}G\times_S L_{S}G\to L_SG$.

Since $a_{\cal{O}_C}$ is an $L^+G$-torsor and $b_{\Div^1}, b_{\cal{O}_C}$ are ind-proper, by using base changes, we have
\[
R(b_k)_*(a_k)^*(i_{G^2})^*R(j_{G^2})_*(p_{F,G^2})^*(A\boxtimes_{\Lambda,\Div^1} B)\cong \Psi(A\star B)
\]
for $A,B\in \Det(\Hck_{G,\Div^1},\Lambda)^{\bd}$.
It suffices to show that if $A, B\in \Det(\Hck_{G,\Spd C},\Lambda)$ are ULA complexes, then
\begin{align}\label{eqn:RjstarKunneth}
R(j_{G^2})_*(A\boxtimes_{\Lambda,\Spd C} B)\cong Rj_*A\boxtimes_{\Lambda,\Spd \cal{O}_C} Rj_*B.
\end{align}
First, if $A, B\in \Det(\Hck_{G,\Spd C},\Lambda)$ are ULA over $\Spd C$, then 
\[
A\boxtimes_{\Lambda,\Spd C} B\in \Det(\Hck_{G,\Spd C}\times_{\Spd C} \Hck_{G,\Spd C},\Lambda)^{\bd}
\]
is ULA over $\Spd C$.
In fact, by \cite[Proposition VI.6.5]{FS}, it suffices to show that if $A_1,A_2\in \Det(\Spd C,\Lambda)$ are locally constant with perfect fibers, then so is $A_1\boxtimes_{\Lambda,\Spd C} A_2=A_1\otimes_{\Lambda}^{\bb{L}} A_2\in \Det(\Spd C,\Lambda)$.
This is clear as $\Det(\Spd C,\Lambda)\simeq D(\Lambda\mathchar`-\rm{mod})$.

Therefore, by Proposition \ref{prop:pullbackequivGtimesG}, we only have to show (\ref{eqn:RjstarKunneth}) after taking $j_{G_2}^*$.
Since pullbacks are compatible with exterior tensor products, we have
\begin{align*}
j_{G_2}^*R(j_{G^2})_*(A\boxtimes_{\Lambda,\Spd C} B)&\cong A\boxtimes_{\Lambda,\Spd C} B\\
&\cong j^*Rj_*A\boxtimes_{\Lambda,\Spd C} j^*Rj_*B\\
&\cong j_{G_2}^*(Rj_*A\boxtimes_{\Lambda,\Spd \cal{O}_C} Rj_*B),
\end{align*}
and the proposition follows.
\end{proof}
\subsection{Relation between the two geometric Satake via nearby cycle} \label{ssc:TorsionRelation}
\begin{thm}\label{thm:F}
Consider the diagram
\begin{align}
\xymatrix{
\Sat(\Hck_{G,\Div^1},\Lambda)\ar[r]^-{F_{\Div^1}}\ar[d]_{(p_F)^*}&\Rep(W_F,\Lambda)\ar[d]^{\rm{For}} \\
\Sat(\Hck_{G,\Spd C},\Lambda)\ar[r]^-{F_{\Spd C}}\ar[d]_{Rj_*}&\rm{Vect}(\Lambda)\ar@{=}[d] \\
\Sat(\Hck_{G,\Spd \cal{O}_C},\Lambda)\ar[r]^-{F_{\Spd \cal{O}_C}}\ar[d]_{i^*}&\rm{Vect}(\Lambda)\ar@{=}[d]\\
\Sat(\Hck_{G,\Spd k},\Lambda)\ar[r]_-{F_{\Spd k}}&\rm{Vect}(\Lambda),
}
\end{align}
where
\begin{align*}
F_S:=F_{G,S}\colon \Sat(\Hck_{G,S},\Lambda)&\to \LocSys(S,\Lambda),\\
A&\mapsto \bigoplus_{i} \cal{H}^i(R\pi_{G,S*}A)
\end{align*}
for a small v-stack $S$ over $\Div^1$ or $\Spd \cal{O}_{\breve{F}}$.
Here $\LocSys(S,\Lambda)$ is the category of $\Lambda$-local systems on $S$, and we use the equivalences
\[
\LocSys(\Div^1)\simeq \Rep(W_F,\Lambda), \LocSys(\Spd C)\simeq \Vect(\Lambda),
\]
and similar results. (This follows from \cite[VI.9.2]{FS} and the equivalence in Corollary \ref{cor:equivlc}).

Then there are isomorphisms $\rm{For}\circ F_{\Div^1}\cong F_{\Spd C}\circ (p_F)^*$ and $F_{\Spd C}\cong F_{\Spd k}\circ Rj_*i^*$.
\end{thm}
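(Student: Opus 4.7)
The plan is to verify that each of the three squares in the displayed diagram commutes separately, and then chain the isomorphisms. The common underlying observation is that for each $f\in\{p_F, j, i\}$, the base change of $f$ to the Hecke stacks, which I will denote $\tilde f$, fits into a cartesian square with the Hecke projections $\pi_{G,S}\colon \Hck_{G,S}\to S$, so $\pi_{G,S}\circ \tilde f = f\circ \pi_{G,S'}$.

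For the top and bottom squares (the cases $f=p_F$ and $f=i$), I would apply proper base change. Any object of the Satake category is supported on a bounded closed substack of the Hecke stack which is proper over the base, so on such a substack proper base change gives $(p_F)^*R\pi_{G,\Div^1*}A\cong R\pi_{G,\Spd C*}\tilde p_F^*A$ and likewise $i^*R\pi_{G,\Spd \cal{O}_C*}B\cong R\pi_{G,\Spd k*}\tilde i^*B$. Taking $\cal{H}^i$ yields the top and bottom commutativities. For the top square one further observes that pullback of local systems along $\Spd C\to \Div^1$ is exactly the forgetful functor $\Rep(W_F,\Lambda)\to\rm{Proj}(\Lambda)$, since $\Spd C$ is simply connected so the composition $\LocSys(\Div^1,\Lambda)=\Rep(W_F,\Lambda)\to\LocSys(\Spd C,\Lambda)=\rm{Proj}(\Lambda)$ is restriction along the trivial subgroup.

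For the middle square the horizontal functor is $Rj_*$, which is not covered by proper base change. Instead, from $\pi_{G,\Spd\cal{O}_C}\circ \tilde j=j\circ \pi_{G,\Spd C}$ I get the functoriality isomorphism $R\pi_{G,\Spd \cal{O}_C*}R\tilde j_*A\cong Rj_*R\pi_{G,\Spd C*}A$ directly. For $A\in\Sat(\Hck_{G,\Spd C},\Lambda)$, each cohomology sheaf $\cal{H}^i(R\pi_{G,\Spd C*}A)$ is a finite projective $\Lambda$-module, viewed as a constant sheaf on the strictly totally disconnected $\Spd C$. Lemma \ref{lemm:perfconstconti} then ensures that $Rj_*$ sends it to the corresponding constant sheaf on $\Spd \cal{O}_C$; the tautological identifications of local systems on $\Spd C$ and $\Spd \cal{O}_C$ with $\rm{Proj}(\Lambda)$ then give the middle commutativity.

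The main obstacle is pinning down proper base change for the ind-proper projection $\pi_{G,S}$ in the diamond setting: one must restrict to a bounded proper closed substack containing the support of the given Satake object and check that its pullback along $p_F$ or $i$ is again proper with the correct support. Both reductions are already implicit in the Satake-preservation statements of Corollary \ref{cor:preserveSat} and the ULA-preservation result of Theorem \ref{thm:pullackequiv}, so no argument beyond the preceding subsections is needed. Chaining the three square-commutativities then produces both $\rm{For}\circ F_{\Div^1}\cong F_{\Spd C}\circ (p_F)^*$ and the claimed $F_{\Spd C}\cong F_{\Spd k}\circ \Psi$, where the latter is understood via its precomposition with $(p_F)^*$.
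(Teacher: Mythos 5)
Your proposal is correct and fills in the details that the paper's one-line proof (``apply the base change results to the cartesian squares'') leaves implicit: proper base change for the ind-proper $\pi_{G,S}$ along $(p_F)^*$ and $i^*$, the compatibility of $\LocSys(\Div^1,\Lambda)\cong\Rep(W_F,\Lambda)$ with pullback to $\Spd C$, and the interpretation of the second stated isomorphism. For the middle square your route is slightly different from the most direct reading of the paper: you use the push-push compatibility $R\pi_{G,\Spd\cal{O}_C*}R\tilde j_*\cong Rj_*R\pi_{G,\Spd C*}$ together with Lemma \ref{lemm:perfconstconti} to identify $Rj_*$ of the constant perfect complex $R\pi_{G,\Spd C*}A$ with the constant complex on $\Spd\cal{O}_C$, whereas one can equivalently apply proper base change for $j^*$ plus $j^*Rj_*=\mathrm{id}$ to check the identity after restriction to $\Spd C$. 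Both are valid; yours has the small advantage of directly producing the object on $\Spd\cal{O}_C$ as a constant complex (and so its cohomology sheaves as local systems), while the base-change route only verifies the identification after pullback. One minor phrasing issue: you speak of applying $Rj_*$ to each $\cal{H}^i(R\pi_{G,\Spd C*}A)$ individually; it is cleaner to apply Lemma \ref{lemm:perfconstconti} to the whole constant complex $R\pi_{G,\Spd C*}A$ (which has perfect value since $A$ is Satake) and then take cohomology, avoiding any implicit splitting claim.
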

\begin{proof}
We can prove this by applying the base change results to the cartesian squares
\[
\xymatrix{
\Hck_{G,\Div^1}\ar@{<-}[r]\ar[d]\ar@{}|{\square}[rd]&\Hck_{G,\Spd C}\ar[r]\ar[d]\ar@{}|{\square}[rd]&\Hck_{G, \Spd \cal{O}_C}\ar@{<-}[r]\ar[d]\ar@{}|{\square}[rd]&\Hck_{G, \Spd k}\ar[d]\\
\Div^1\ar@{<-}[r]&\Spd C\ar[r]&\Spd \cal{O}_C\ar@{<-}[r]&\Spd k.
}
\]
\end{proof}
\begin{cor}\label{cor:symmonF}
There exist symmetric monoidal structures on the category 
\[
(\Sat(\Hck_{G, \Spd k},\Lambda),\star)
\]
 and on the functors
\begin{align*}
i^*Rj_*&\colon \Sat(\Hck_{G, \Spd C},\Lambda)\to \Sat(\Hck_{G, \Spd k},\Lambda),\\
F_{\Spd k}&\colon \Sat(\Hck_{G, \Spd k},\Lambda)\to \rm{Vect}(\Lambda)
\end{align*}
such that the isomorphism
\[
F_{\Spd k}\circ i^*Rj_*\cong F_{\Spd C}
\]
in Theorem \ref{thm:F} is monoidal, with respect to the monoidal structure of $F_{\Spd C}$ using fusion products (see \cite[VI.9]{FS}).
\end{cor}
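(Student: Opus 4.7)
The plan is to transport structures across the equivalence $i^* Rj_*\colon \Sat(\Hck_{\Spd C},\Lambda)\overset{\sim}{\to}\Sat(\Hck_{\Spd k},\Lambda)$ from Corollary \ref{cor:preserveSat}. By \cite[VI.9]{FS}, the category $\Sat(\Hck_{\Spd C},\Lambda)$ already carries a symmetric monoidal structure (inherited from the fusion product on $\Sat(\Hck_{\Div^1},\Lambda)$ via the symmetric monoidal functor $(p_F)^*$), together with a symmetric monoidal fiber functor $F_{\Spd C}$. The goal is to promote $i^* Rj_*$ to a symmetric monoidal equivalence, which will automatically give the desired structure on $(\Sat(\Hck_{\Spd k},\Lambda),\star)$ and on $F_{\Spd k}$.

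The main step, and the only nontrivial one, is to verify that $i^* Rj_*$ is monoidal for the convolution product $\star$ on both sides. This is a slight variant of Proposition \ref{prop:monoidal}: inspecting its proof, the role of $(p_F)^*$ is purely formal, since pullbacks commute with external tensor products and with the cartesian squares used in the base change argument. The substantive content is the K\"unneth-type isomorphism
\[
R(j_{G^2})_*(A\boxtimes_{\Lambda,\Spd C}B)\cong Rj_*A\boxtimes_{\Lambda,\Spd\cal{O}_C}Rj_*B
\]
for ULA sheaves $A,B$, obtained in (\ref{eqn:RjstarKunneth}) via Proposition \ref{prop:pullbackequivGtimesG}, combined with ind-properness of $b_{\Spd\cal{O}_C}$ and proper base change along $i$ and $j_{G^2}$. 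Running the diagram in the proof of Proposition \ref{prop:monoidal} with $(p_F)^*$ omitted then produces a natural isomorphism $i^* Rj_*(A\star B)\cong (i^* Rj_*A)\star(i^* Rj_*B)$, exhibiting $i^* Rj_*$ as a monoidal equivalence for $\star$.

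Once this is established, the remainder is formal. Transporting the symmetric monoidal data (unit, associativity constraint, braiding) from $\Sat(\Hck_{\Spd C},\Lambda)$ along the monoidal equivalence $i^* Rj_*$ endows $(\Sat(\Hck_{\Spd k},\Lambda),\star)$ with a symmetric monoidal structure for which $i^* Rj_*$ becomes a symmetric monoidal equivalence by construction. Likewise, we equip $F_{\Spd k}$ with the unique monoidal structure making the natural isomorphism $F_{\Spd k}\circ i^* Rj_*\cong F_{\Spd C}$ of Theorem \ref{thm:F} into a monoidal isomorphism; this determines $F_{\Spd k}$ unambiguously because $i^* Rj_*$ is a monoidal equivalence and $F_{\Spd C}$ is symmetric monoidal by \cite[VI.9]{FS}.

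The hard part is thus the K\"unneth step above; once it is in hand, the compatibilities required by the corollary all follow by transport of structure along a symmetric monoidal equivalence.
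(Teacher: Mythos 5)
Your proposal is correct and follows essentially the same route as the paper: transport of the fusion symmetric monoidal structure on $\Sat(\Hck_{\Spd C},\Lambda)$ along the equivalence $i^*Rj_*$ of Corollary \ref{cor:preserveSat}, using the fiber-functor isomorphism of Theorem \ref{thm:F}. The paper's own proof is a one-liner citing only Theorem \ref{thm:F} and the equivalence; you are more careful in flagging that one must also know $i^*Rj_*$ intertwines the convolution bifunctors so that the transported monoidal structure has underlying bifunctor $\star$, which is exactly the content of the K\"unneth isomorphism (\ref{eqn:RjstarKunneth}) in the proof of Proposition \ref{prop:monoidal} with the $(p_F)^*$ step stripped away.
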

\begin{proof}
The pullback functors $i^*$ and $j^*$ are monoidal, by the standard argument on the convolution product.
Since $i^*Rj_*=i^*(j^*)^{-1}$ holds by Theorem \ref{thm:pullackequiv}, the functor $i^*Rj_*$ is also monoidal.

Thus by pulling back the symmetric monoidal structure on $\Sat(\Hck_{G, \Spd C},\Lambda)$ to $\Sat(\Hck_{G, \Spd k},\Lambda)$ by the monoidal equivalence $i^*Rj_*$, we get the desired structure.
The isomorphism $F_{\Spd k}\circ i^*Rj_*\cong F_{\Spd C}$ is monoidal by Theorem \ref{thm:F}.
\end{proof}
The main theorem follows from this corollary.
\begin{thm}\label{thm:main}
There exist symmetric monoidal structures on the category 
\[
(\Sat(\Hck_{G,\Spd k},\Lambda),\star)
\]
and on the functor
\begin{align*}
F_{\Spd k}&\colon \Sat(\Hck_{G,\Spd k},\Lambda)\to \rm{Vect}(\Lambda)
\end{align*}
which induce a symmetric monoidal equivalence
\[
\scr{S}_{\YZ}\colon \Sat(\Hck_{G,\Spd k},\Lambda)\overset{\sim}{\longrightarrow} \Rep(\wh{G},\Lambda)
\]
such that the squares
\[
\xymatrix{
\Sat(\Hck_{G,\Div^1}, \Lambda)\ar[r]^-{\scr{S}_{\FS}}\ar[d]_{(p_F)^*}&\Rep(\wh{G}^{\rm{tw}}\rtimes W_F,\Lambda)\ar[d]^{\rm{For}}\\
\Sat(\Hck_{G,\Spd C}, \Lambda)\ar[r]^-{\scr{S}_{\FS}}\ar[d]_{i^*Rj_*}&\Rep(\wh{G},\Lambda)\ar@{=}[d]\\
\Sat(\Hck_{G,\Spd k},\Lambda)\ar[r]_-{\scr{S}_{\YZ}}&\Rep(\wh{G},\Lambda)
}
\]
naturally commute.
\end{thm}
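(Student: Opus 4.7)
The plan is to reduce the theorem to Corollary \ref{cor:symmonF} by invoking the Fargues--Scholze geometric Satake at the geometric fiber $\Spd C$ to identify the target of the fiber functor with $\Rep(\wh{G},\Lambda)$. First I would observe that restricting $\scr{S}_{\FS}$ along the pullback $(p_F)^*\colon \Sat(\Hck_{G,\Div^1},\Lambda)\to \Sat(\Hck_{G,\Spd C},\Lambda)$ yields a symmetric monoidal equivalence
\[
\scr{S}_{\FS}\colon \Sat(\Hck_{G,\Spd C},\Lambda)\xrightarrow{\sim}\Rep(\wh{G},\Lambda),
\]
since passing to the geometric point $\Spd C\to \Div^1$ trivializes the (twisted) $W_F$-action appearing in \cite[Theorem I.6.3]{FS}; this produces the identification in the middle row of the target diagram.

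Next I would define
\[
\scr{S}_{\YZ}:=\scr{S}_{\FS}\circ (i^*Rj_*)^{-1}\colon \Sat(\Hck_{G,\Spd k},\Lambda)\xrightarrow{\sim}\Rep(\wh{G},\Lambda),
\]
which is an equivalence of categories by Corollary \ref{cor:preserveSat}. By Corollary \ref{cor:symmonF}, the functor $i^*Rj_*$ is symmetric monoidal with respect to the $\star$-product on $\Sat(\Hck_{G,\Spd k},\Lambda)$ and the fusion-product symmetric monoidal structure on $\Sat(\Hck_{G,\Spd C},\Lambda)$. Composing with the symmetric monoidal equivalence $\scr{S}_{\FS}$ displayed above, the functor $\scr{S}_{\YZ}$ inherits a symmetric monoidal structure and remains an equivalence. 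In particular, the symmetric monoidal structure on $F_{\Spd k}$ from Corollary \ref{cor:symmonF} factors through $\scr{S}_{\YZ}$ followed by the underlying vector space functor $\Rep(\wh{G},\Lambda)\to \rm{Proj}(\Lambda)$.

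Finally, I would verify commutativity of the two squares. The lower square commutes by the very definition of $\scr{S}_{\YZ}$. For the upper square, the point is that $\scr{S}_{\FS}$ is functorial in the base: the pullback along $p_F\colon \Hck_{G,\Spd C}\to \Hck_{G,\Div^1}$ on the sheaf side corresponds, under $\scr{S}_{\FS}$, to restriction along $\wh{G}\hookrightarrow \wh{G}^{\rm{tw}}\rtimes W_F$ on the representation side, which is precisely the forgetful functor. The main obstacle is ensuring that the symmetric monoidal structure transported from fusion products via $i^*Rj_*$ agrees with the one making $\scr{S}_{\FS}$ monoidal when we invoke \cite{FS} at the geometric fiber; this reduces to examining how the fusion product interacts with pullback along $p_F$ and with the Weil-descent construction underlying $\scr{S}_{\FS}$, after which the two compatibilities patch together to give the commutative diagram asserted in the theorem.
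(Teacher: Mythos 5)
Your proposal matches the paper's approach: the paper's one-line proof (``Apply Tannakian theory to the monoidal natural isomorphism in Corollary~\ref{cor:symmonF}'') unpacks to precisely your construction, namely transporting the Fargues--Scholze Tannakian equivalence at $\Spd C$ along the monoidal equivalence $i^*Rj_*$ (using the monoidal isomorphism $F_{\Spd k}\circ i^*Rj_*\cong F_{\Spd C}$ of fiber functors) to define $\scr{S}_{\YZ}$, with the top square commuting by the compatibility of $\scr{S}_{\FS}$ with pullback to the geometric fiber in \cite{FS}. The only minor imprecision is phrasing $\Spd C\to\Div^1$ as ``trivializing'' the $W_F$-action --- more accurately, the $W_F$-equivariance on the dual side corresponds to descent data from $\Spd C$ to $\Div^1$, and pulling back to $\Spd C$ just discards it, which is what the forgetful functor records --- but this does not affect the substance of the argument.
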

\begin{proof}
Consider applying Tannakian reconstructions to each of the horizontal morphisms in Corollary \ref{cor:symmonF}, see \cite[Proposition 11.1]{MV} or \cite[Proposition VI.10.2]{FS}.
Then the theorem follows from the functoriality of the Tannakian reconstruction.
\end{proof}
\subsection{Monoidal structure on hyperbolic localization functor}
\label{ssc:TorsionMonoidal}
Let $P$ be a parabolic subgroup of $G$.
Let $M$ denote its Levi quotient, and $\overline{M}$ its maximal torus quotient.
Define a locally constant function $\deg_P\colon \Gr_{M,\Spd k}\to \ZZ$ as the composition
\begin{align*}
\Gr_{M,\Spd k}\longrightarrow \Gr_{\overline{M},\Spd k}
\overset{\sim}{\longrightarrow} X_*(\overline{M})
\overset{\langle 2\rho_G-2\rho_M,-\rangle}{\longrightarrow} \ZZ
\end{align*}
where $\rho_G, \rho_M$ are half sums of positive roots of $G,M$, respectively.
\begin{thm}\label{thm:CT}
Consider the shifted hyperbolic localization functor
\[
\CT_P[\deg_P]:=R(p^+_k)_!(q^+_k)^*[\deg_P]\colon \Det(\Hck_{G,\Spd k},\Lambda)^{\bd}\to \Det(\Hck_{M,\Spd k},\Lambda)^{\bd}
\]
where $p^+_k\colon \Gr_{P,\Spd k}\to \Gr_{M,\Spd k}, q^+_k\colon \Gr_{P,\Spd k}\to \Gr_{G,\Spd k}$ are natural maps.
Then $\CT_P[\deg_P]$ induces the functor
\[
\CT_P[\deg_P]\colon \Sat(\Hck_{G,\Spd k},\Lambda)\to \Sat(\Hck_{M,\Spd k},\Lambda)
\]
and there is a unique symmetric monoidal structure on $\CT_P[\deg_P]$ such that the natural isomorphism
\begin{align}\label{fml:FCT}
F_{G,\Spd k} \cong F_{M,\Spd k}\circ \CT_P[\deg_P]\colon \Sat(\Hck_{G,\Spd k},\Lambda)\to \rm{Vect}(\Lambda)
\end{align}
is monoidal.
Here the symmetric monoidal structures on the categories $\Sat(\Hck_{G,\Spd k},\Lambda)$, $\Sat(\Hck_{M,\Spd k},\Lambda)$
and the functors $F_{G,\Spd k}, F_{M,\Spd k}$ are as in Corollary \ref{cor:symmonF}.
\end{thm}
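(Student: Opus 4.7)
The strategy is to transport the result from the generic fiber $\Spd C$, where it is essentially contained in \cite{FS}, to the special fiber $\Spd k$ via the symmetric monoidal equivalence $i^* \circ Rj_* \colon \Sat(\Hck_{\Spd C}, \Lambda) \overset{\sim}{\to} \Sat(\Hck_{\Spd k}, \Lambda)$ supplied by Corollary \ref{cor:symmonF}, and to deduce uniqueness via Tannakian rigidity using Theorem \ref{thm:main}.

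First I would check that $\CT_P[\deg_P]$ preserves the Satake category. Preservation of ULA objects is \cite[Proposition VI.6.4]{FS}, and preservation of flat perversity after the shift by $\deg_P$ is a parity/dimension statement coming from Braden's theorem together with the analysis of connected components of $\Gr_{M,\Spd k}$, as used in \cite[Proposition VI.7.7]{FS}. The natural isomorphism $F_{G,\Spd k}\cong F_{M,\Spd k}\circ \CT_P[\deg_P]$ follows by factoring $R\pi_{G,\Spd k\,*}$ as $R\pi_{M,\Spd k\,*}\circ R(p^+_k)_*\circ (q^+_k)^*$ via proper base change along the ind-proper map $q^+_k$ and then invoking the hyperbolic localization identity $R(p^+_k)_!\cong R(p^+_k)_*$ on ULA perverse sheaves.

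Next I would establish the base-change compatibility
\[
\CT_P[\deg_P]\circ (i^*\circ Rj_*)\cong (i^*\circ Rj_*)\circ \CT_P[\deg_P]
\]
using the cartesian diagrams relating $\Gr_{P,\Spd k}$, $\Gr_{P,\Spd \cal{O}_C}$, $\Gr_{P,\Spd C}$ to the corresponding $G$- and $M$-versions; the key inputs are proper base change for the ind-proper map $p^+$ and the usual compatibility of $(q^+)^*$ with the horizontal pullbacks $i^*, j^*$, together with the fact that $\deg_P$ is locally constant on connected components that match through $i^*Rj_*$. Over $\Spd C$ the functor $\CT_P[\deg_P]$ carries a canonical symmetric monoidal structure, since under $\scr{S}_{\FS}$ it is identified with the restriction along the Levi embedding $\wh{M}\hookrightarrow \wh{G}$ (see \cite[VI.9]{FS}). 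Transferring this structure through the symmetric monoidal equivalence $i^*\circ Rj_*$ of Corollary \ref{cor:symmonF} equips $\CT_P[\deg_P]\colon \Sat(\Hck_{G,\Spd k},\Lambda)\to \Sat(\Hck_{M,\Spd k},\Lambda)$ with a symmetric monoidal structure, and compatibility with (\ref{fml:FCT}) is inherited from the analogous compatibility on the $\Spd C$ side together with the monoidality of the natural isomorphism $F_{\Spd k}\circ i^*\circ Rj_*\cong F_{\Spd C}$.

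For uniqueness, I would invoke Tannakian rigidity: under the identifications $\Sat(\Hck_{G,\Spd k},\Lambda)\simeq \Rep(\wh{G},\Lambda)$ and $\Sat(\Hck_{M,\Spd k},\Lambda)\simeq \Rep(\wh{M},\Lambda)$ afforded by Theorem \ref{thm:main}, with $F_{G,\Spd k}, F_{M,\Spd k}$ becoming the respective forgetful functors, a symmetric monoidal structure on $\CT_P[\deg_P]$ making (\ref{fml:FCT}) monoidal corresponds to a homomorphism $\wh{M}\to \wh{G}$ of dual group schemes realising the underlying functor, which is rigidly determined. The main obstacle is the honest verification that the monoidal structure obtained by transport realises (\ref{fml:FCT}) as a monoidal natural isomorphism rather than merely as a natural isomorphism of ordinary functors: this requires a careful unwinding of the fusion-based monoidal structure on $\Spd C$ against the convolution-based monoidal structure on $\Spd k$ through the nearby cycle functor and the compatibilities of Proposition \ref{prop:monoidal}.
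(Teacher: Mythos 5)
Your overall strategy is the same as the paper's: construct the symmetric monoidal structure on $\CT_P[\deg_P]$ over $\Spd C$ by citing \cite[Proposition VI.9.6]{FS}, transport it to $\Spd k$ through the symmetric monoidal equivalence $i^*Rj_*$ of Corollary \ref{cor:symmonF} via the naturally commuting square relating $\CT_P[\deg_P]$ over $\Spd C$ and $\Spd k$, and then deduce uniqueness --- the paper does this by the one-line observation that $F_{G,\Spd k}$ is faithful, which is just the concrete form of the Tannakian rigidity you invoke. One small inaccuracy worth flagging: the formula $R\pi_{G,\Spd k\,*}\cong R\pi_{M,\Spd k\,*}\circ R(p^+_k)_*\circ(q^+_k)^*$ you offer for (\ref{fml:FCT}) is not a literal factorization ($q^+_k$ is a locally closed immersion, not ind-proper, and $R\pi_{G*}$ does not factor through $(q^+_k)^*$ in that way); the constant-term isomorphism is instead the content of \cite[Proposition VI.9.6]{FS}, proved there via the semi-infinite stratification and Braden's theorem, which the paper simply cites rather than re-deriving.
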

\begin{proof}
By the same argument as \cite[Proposition VI.9.6]{FS}, there is a natural symmetric monoidal structure on the functor
\[
\CT_P[\deg_P]\colon \Sat(\Hck_{G,\Spd C},\Lambda)\to \Sat(\Hck_{M,\Spd C},\Lambda)
\]
such that the natural isomorphism
\[
F_{G,\Spd C} \cong F_{M,\Spd C}\circ \CT_P[\deg_P]\colon \Sat(\Hck_{G,\Spd C},\Lambda)\to \rm{Vect}(\Lambda)
\]
is monoidal.
The following square is naturally commutative
\[
\xymatrix@C=48pt{
\Det^{\ULA}(\Hck_{G,\Spd C},\Lambda)\ar[r]^{\CT_P[\deg_P]}\ar[d]_{i^*Rj_*}& \Det^{\ULA}(\Hck_{M,\Spd C},\Lambda)\ar[d]_{i^*Rj_*}\\
\Det^{\ULA}(\Hck_{G,\Spd k},\Lambda)\ar[r]^{\CT_P[\deg_P]}& \Det^{\ULA}(\Hck_{M,\Spd k},\Lambda), 
}
\]
and the vertical arrows $i^*Rj_*$ induce equivalences of the Satake categories.
By this diagram, we can endow the functor $\CT_P[\deg_P]$ between Satake categories with a symmetric monoidal structure.
The isomorphism (\ref{fml:FCT}) is monoidal by definition.
The uniqueness of such a monoidal structure follows from the fact that $F_{G,\Spd k}$ is faithful by \cite[Definition/Proposition VI.7.10]{FS}.
\end{proof}
\section{Integral coefficient case}\label{sec:int}
In this subsection, $\Lambda$ is the ring of integers of a finite extension of $\bb{Q}_{\ell}$.
Let $\lambda\in \Lambda$ be a uniformizer.
First, recall the definition of $\Det(-,\Lambda)$.
\begin{defi}(\cite[Definition 26.1]{Sch})
For any small v-stack $Y$, define
\[
\Det(Y,\Lambda)\subset D(Y_v,\Lambda)
\]
as the full subcategory of all $A\in D(Y_v,\Lambda)$ such that $A$ is derived $(\lambda)$-complete, and $A\otimes_{\Lambda}^{\bb{L}}\Lambda/\lambda$ lies in $\Det(Y,\Lambda/\lambda)$.
\end{defi}
\begin{lemm}(\cite[Remark 26.3]{Sch})\label{lemm:modcomm}
\begin{enumerate}
\item[(i)] 
For any map $f\colon X\to Y$ of small v-stacks, the following squares are commutative:
\[
\xymatrix{
\Det(Y,\Lambda)\ar[r]^{f^*}\ar[d]_{-\otimes_{\Lambda}^{\bb{L}}\Lambda/\lambda^n}&\Det(X,\Lambda)\ar[d]_{-\otimes_{\Lambda}^{\bb{L}}\Lambda/\lambda^n}\\
\Det(Y,\Lambda/\lambda^n)\ar[r]^{f^*}&\Det(X,\Lambda/\lambda^n), 
}\quad
\xymatrix{
\Det(X,\Lambda)\ar[r]^{Rf_*}\ar[d]_{-\otimes_{\Lambda}^{\bb{L}}\Lambda/\lambda^n}&\Det(Y,\Lambda)\ar[d]_{-\otimes_{\Lambda}^{\bb{L}}\Lambda/\lambda^n}\\
\Det(X,\Lambda/\lambda^n)\ar[r]^{Rf_*}&\Det(Y,\Lambda/\lambda^n).
}
\]
\item[(ii)]
Let $f\colon X\to Y$ be a map of small v-stacks which is compactifiable, representable in locally spatial diamonds and with $\dimtrg f<\infty$.
The following squares are commutative:
\[
\xymatrix{
\Det(X,\Lambda)\ar[r]^{Rf_!}\ar[d]_{-\otimes_{\Lambda}^{\bb{L}}\Lambda/\lambda^n}&\Det(Y,\Lambda)\ar[d]_{-\otimes_{\Lambda}^{\bb{L}}\Lambda/\lambda^n}\\
\Det(X,\Lambda/\lambda^n)\ar[r]^{Rf_!}&\Det(Y,\Lambda/\lambda^n),
}\quad
\xymatrix{
\Det(Y,\Lambda)\ar[r]^{Rf^!}\ar[d]_{-\otimes_{\Lambda}^{\bb{L}}\Lambda/\lambda^n}&\Det(X,\Lambda)\ar[d]_{-\otimes_{\Lambda}^{\bb{L}}\Lambda/\lambda^n}\\
\Det(Y,\Lambda/\lambda^n)\ar[r]^{Rf^!}&\Det(X,\Lambda/\lambda^n).
}
\]
\end{enumerate}
\end{lemm}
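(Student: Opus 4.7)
The plan is to exploit that $\Lambda/\lambda^n$ is a perfect complex of $\Lambda$-modules: explicitly, it is quasi-isomorphic to the two-term Koszul complex $[\Lambda\xrightarrow{\lambda^n}\Lambda]$. Consequently, for any object $A$ in any $\Lambda$-linear triangulated category, reduction mod $\lambda^n$ fits into the canonical distinguished triangle
\begin{equation*}
A\xrightarrow{\lambda^n} A\longrightarrow A\otimes_\Lambda^{\mathbb{L}}\Lambda/\lambda^n\longrightarrow A[1],
\end{equation*}
which identifies $-\otimes_\Lambda^{\mathbb{L}}\Lambda/\lambda^n$ with the cone of multiplication by $\lambda^n$.

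All four functors appearing in the lemma---$f^*$ and $Rf_*$ in part (i), $Rf_!$ and $Rf^!$ in part (ii)---are $\Lambda$-linear and triangulated, being restrictions to $\Det(-,\Lambda)$ of the corresponding operations on the v-site derived categories as constructed in \cite[\S 17--\S 23]{Sch}. Applying any such functor $F$ to the Koszul triangle above, the first map is still multiplication by $\lambda^n$ (by $\Lambda$-linearity), so by the universal property of the cone the third vertex must be identified with $F(A)\otimes_\Lambda^{\mathbb{L}}\Lambda/\lambda^n$. This is precisely the commutativity expressed by each of the four squares. In part (ii), the hypotheses on $f$ (compactifiable, representable in locally spatial diamonds, $\dimtrg f<\infty$) are exactly those required to have $Rf_!$ and $Rf^!$ available; once they are defined, the cone argument goes through verbatim.

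The main obstacle, and the only genuinely nonformal point, is the preliminary verification that each of the four functors actually sends $\Det(-,\Lambda)$ to $\Det(-,\Lambda)$, that is, preserves derived $\lambda$-completeness together with the condition that reduction mod $\lambda$ lies in $\Det(-,\Lambda/\lambda)$. This part is handled in the setup of \cite[Definition 26.1]{Sch}, using the corresponding torsion-coefficient assertions together with the interaction of $(-)\otimes_\Lambda^{\mathbb{L}}\Lambda/\lambda$ with derived completion. Granted that this setup is in place, the commutativity of all four squares is purely formal from the cone presentation of $\Lambda/\lambda^n$, and no further input is needed.
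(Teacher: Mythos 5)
The paper does not prove this lemma; it simply cites \cite[Remark 26.3]{Sch}, so the relevant comparison is with Scholze's own discussion there. Your cone argument — identifying $-\otimes_\Lambda^{\mathbb{L}}\Lambda/\lambda^n$ with the cone of multiplication by $\lambda^n$ and invoking $\Lambda$-linearity and exactness — is sound and gives a clean, uniform proof of the commutativity for $f^*$ and $Rf_*$: both of these genuinely \emph{are} restrictions of the v-site operations to $\Det(-,\Lambda)$, they are $\Lambda$-linear exact functors of stable $\infty$-categories, so commuting with the cone of $\lambda^n$ is formal.

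However, the premise in the middle of your argument is not quite accurate for the second half, and this is where Scholze's actual route diverges from yours. The operations $Rf_!$ and $Rf^!$ constructed in \cite[\S 17--\S 23]{Sch} are defined only for \emph{torsion} coefficients; there is no ambient $Rf_!$ or $Rf^!$ on $D(X_v,\Lambda)$ for $\Lambda$ a complete discrete valuation ring of which the version on $\Det(X,\Lambda)$ could be a restriction. Scholze instead \emph{defines} $Rf_!$ and $Rf^!$ with $\Lambda$-coefficients through the equivalence $\Det(X,\Lambda)\simeq \varprojlim_n \Det(X,\Lambda/\lambda^n)$ of \cite[Proposition 26.2]{Sch}, by passing to the limit of the torsion-level functors; with this definition, compatibility with $-\otimes_\Lambda^{\mathbb{L}}\Lambda/\lambda^n$ is essentially built into the construction rather than deduced afterwards. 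Your cone argument can still be run as an \emph{a posteriori} consistency check once one knows that the limit functors are $\Lambda$-linear and exact, but you should either correct the claim that $Rf_!, Rf^!$ are restrictions of operations on $D(X_v,\Lambda)$, or restructure part (ii) so that the definition via $\varprojlim_n$ comes first and the compatibility follows from that presentation. As written, the proof of part (ii) tacitly assumes the existence of a functor whose construction is exactly what is at issue.
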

\begin{proof}
See \cite[Remark 26.3]{Sch}.
\end{proof}
Also, the definition of universal local acyclicity is as follows:
\begin{defi}(\cite[VII.5]{FS})
Let $f\colon X\to S$ be a compactifiable map of small v-stacks representable by locally spatial diamonds with locally $\dimtrg f<\infty$.
Then we define the category of ULA complexes $\Det^{\ULA}(X/S,\Lambda)$ by $\ds\lim_{\substack{\longleftarrow\\ n}}\Det^{\ULA}(X/S,\Lambda/\lambda^n)$.
\end{defi}
\begin{defi}
Let $S$ be a small v-stack over $\Div^1$ or $\Spd \cal{O}_{\breve{F}}$.
We say that $A\in \Det(\Hck_{G,S},\Lambda)$ is ULA if its pullback to $\Gr_{G,S}$ is ULA.
Write $\Det^{\ULA}(\Hck_{G,S},\Lambda)$ for the full subcategory of $\Det(\Hck_{G,S},\Lambda)$ consisting of ULA objects.
\end{defi}
\begin{lemm}\label{lemm:ULAlim}
Let $S$ be a small v-stack over $\Div^1$ or $\Spd \cal{O}_{\breve{F}}$.
For an object $A\in \Det(\Hck_{G,S},\Lambda)$, the following conditions are equivalent:
\begin{enumerate}
\item[(i)] $A\in \Det^{\ULA}(\Hck_{G,S},\Lambda)$.
\item[(ii)] $A\otimes_{\Lambda}^{\bb{L}} \Lambda/\lambda^n \in \Det^{\ULA}(\Hck_{G,S},\Lambda/\lambda^n)$ for all $n$.
\end{enumerate}
\end{lemm}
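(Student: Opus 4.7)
The plan is to reduce to the Beilinson--Drinfeld affine Grassmannian and then unwind the limit definition of $\Det^{\ULA}(-,\Lambda)$.

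First, I would reduce to the analogous statement on $\Gr_{G,S}$. By the definition just above, an object $A\in \Det(\Hck_{G,S},\Lambda)$ is ULA iff its pullback $\wt{A}$ to $\Gr_{G,S}$ is ULA over $S$, and likewise with $\Lambda/\lambda^n$-coefficients. Since pullback along the quotient $\Gr_{G,S}\to \Hck_{G,S}$ commutes with $-\otimes_{\Lambda}^{\bb{L}}\Lambda/\lambda^n$ by Lemma \ref{lemm:modcomm}(i), it is enough to prove the equivalence for $\wt{A}\in \Det(\Gr_{G,S},\Lambda)$ with respect to the structural map $\Gr_{G,S}\to S$.

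Second, I would directly invoke the defining identity
\[
\Det^{\ULA}(\Gr_{G,S}/S,\Lambda)=\ds\lim_{\substack{\longleftarrow\\ n}}\Det^{\ULA}(\Gr_{G,S}/S,\Lambda/\lambda^n),
\]
together with the derived $\lambda$-completeness equivalence $\Det(\Gr_{G,S},\Lambda)\simeq \ds\lim_{\substack{\longleftarrow\\ n}}\Det(\Gr_{G,S},\Lambda/\lambda^n)$, under which $\wt{A}$ corresponds to the compatible system $(\wt{A}\otimes_{\Lambda}^{\bb{L}}\Lambda/\lambda^n)_{n}$. Since each $\Det^{\ULA}(\Gr_{G,S}/S,\Lambda/\lambda^n)$ sits as a full subcategory of $\Det(\Gr_{G,S},\Lambda/\lambda^n)$, membership of $\wt{A}$ in the limit of the ULA subcategories is equivalent to membership of each $\wt{A}\otimes_{\Lambda}^{\bb{L}}\Lambda/\lambda^n$ at level $n$, which is exactly the equivalence of (i) and (ii).

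The only step that requires real verification is that the transition functors $-\otimes_{\Lambda/\lambda^{n+1}}^{\bb{L}}\Lambda/\lambda^n$ appearing on both sides of the comparison preserve ULA so that the two inverse systems genuinely align; this is immediate from the stability of torsion-coefficient ULA under change of coefficient ring. Consequently I expect the proof to be essentially formal, with no serious obstacle beyond careful matching of the two limit descriptions.
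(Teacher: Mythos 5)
Your proposal is correct and follows essentially the same route as the paper: reduce to $\Gr_{G,S}$ via the commutativity of pullback with $-\otimes_{\Lambda}^{\bb{L}}\Lambda/\lambda^n$ (Lemma \ref{lemm:modcomm}(i)), then unwind the defining inverse-limit description of $\Det^{\ULA}(\Gr_{G,S}/S,\Lambda)$. The paper's proof is slightly terser, but the key steps coincide.
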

\begin{proof}
The square
\[
\xymatrix{
\Det(\Hck_{G,S},\Lambda)\ar[d]_{-\otimes^{\bb{L}}_{\Lambda} \Lambda/\lambda^n}\ar[r]^{\pi^*}&\Det(\Gr_{G,S},\Lambda)\ar[d]^{-\otimes^{\bb{L}}_{\Lambda} \Lambda/\lambda^n}\\
\Det(\Hck_{G,S},\Lambda/\lambda^n)\ar[r]^{\pi^*}&\Det(\Gr_{G,S},\Lambda/\lambda^n)
}
\]
is commutative, where $\pi\colon \Gr_{G,S}\to \Hck_{G,S}$ is the projection.
For $A\in \Det(\Hck_{G,S},\Lambda)$, the object $\pi^*A$ is ULA if and only if $(\pi^*A)\otimes^{\bb{L}} \Lambda/\lambda^n\cong \pi^*(A\otimes^{\bb{L}} \Lambda/\lambda^n)$ is ULA for all $n$.
Hence $A$ is ULA if and only if $A\otimes^{\bb{L}} \Lambda/\lambda^n$ is ULA for all $n$.
\end{proof}
This equivalence 
implies that the essential image of $\Det^{\ULA}(\Hck_{G,S},\Lambda)$ under the categorical equivalence
\[
\Det(\Hck_{G,S},\Lambda)\cong \lim_{\substack{\longleftarrow\\ n}}\Det(\Hck_{G,S},\Lambda/\lambda^n)
\]
in \cite[Proposition 26.2]{Sch} is $\ds\lim_{\substack{\longleftarrow\\ n}} D^{\ULA/S}(\Hck_{G,S},\Lambda/\lambda^n)$.
Also, we define the flat perversity as follows.
\begin{defi}
For $A\in \Det^{\ULA}(\Hck_{G,S},\Lambda)$, 
we say that $A$ is flat perverse over $\Lambda$ if and only if $A\otimes^{\bb{L}}_{\Lambda} \Lambda/\lambda^n$ is flat perverse over $\Lambda/\lambda^n$ for all $n$.
\end{defi}
From this definition, we have
\[
\Sat(\Hck_{G,S},\Lambda)=\lim_{\substack{\longleftarrow\\ n}}\Sat(\Hck_{G,S},\Lambda/\lambda^n).
\]
By passing to the limit of the results in \S\ref{ssc:nearby}, \ref{ssc:monoidal}, \ref{ssc:TorsionRelation}, \ref{ssc:TorsionMonoidal} and using Lemma \ref{lemm:modcomm}, we get the following theorem:
\begin{thm}\label{thm:mainint}
Theorem \ref{thm:pullackequiv}, Corollary \ref{cor:preserveSat}, Proposition \ref{prop:monoidal}, Theorem \ref{thm:main} and Theorem \ref{thm:CT} hold even if $\Lambda$ is the ring of integers in a finite extension over $\QQ_{\ell}$.
\end{thm}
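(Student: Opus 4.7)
The plan is to reduce each assertion to its torsion counterpart by expressing all categories as inverse limits indexed by $n$ and using Lemma \ref{lemm:modcomm} to check that every functor in play commutes with the reduction $-\otimes^{\bb{L}}_{\Lambda}\Lambda/\lambda^n$. By the very definition of $\Det(-,\Lambda)$ together with the identifications
\[
\Det(\Hck_{G,S},\Lambda)\cong \lim_{\substack{\longleftarrow\\ n}}\Det(\Hck_{G,S},\Lambda/\lambda^n),\quad \Sat(\Hck_{G,S},\Lambda)=\lim_{\substack{\longleftarrow\\ n}}\Sat(\Hck_{G,S},\Lambda/\lambda^n),
\]
and the analogous identification for $\Det^{\ULA}$ from Lemma \ref{lemm:ULAlim}, it is enough to assemble the torsion-level statements into compatible systems.

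First I would handle Theorem \ref{thm:pullackequiv} and Corollary \ref{cor:preserveSat}. By Lemma \ref{lemm:modcomm}, the functors $j^*$, $Rj_*$, $i^*$, and $(p_F)^*$ all commute with $-\otimes^{\bb{L}}_{\Lambda}\Lambda/\lambda^n$. The torsion case applied to each $\Lambda/\lambda^n$ thus produces compatible systems of equivalences between the corresponding ULA and Satake categories, and these pass to the inverse limit to yield the desired equivalences over $\Lambda$. The quasi-inverse identity $j^*Rj_*\cong \mathrm{id}$ is preserved termwise, hence in the limit.

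For Proposition \ref{prop:monoidal} the convolution $\star$ is built from pullbacks, an $L^+G$-torsor quotient, and an ind-proper pushforward, all of which commute with reduction mod $\lambda^n$ by Lemma \ref{lemm:modcomm} (the torsor quotient via v-descent, where one uses that the exterior product $\boxtimes$ is compatible with $-\otimes^{\bb{L}}_{\Lambda}\Lambda/\lambda^n$ by base change). The isomorphism $\Psi(A\star B)\cong \Psi(A)\star\Psi(B)$ at each torsion level therefore assembles into one over $\Lambda$. The same principle handles Theorem \ref{thm:CT}: the hyperbolic localization $\CT_P[\deg_P]$ commutes with reduction, so its symmetric monoidal structure at each torsion level extends to one over $\Lambda$, and the monoidality of the isomorphism (\ref{fml:FCT}) is inherited level-by-level.

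Finally, for Theorem \ref{thm:main} I would lift the symmetric monoidal structures on $(\Sat(\Hck_{G,\Spd k},\Lambda),\star)$ and on $F_{\Spd k}$ from their torsion counterparts by the same limit procedure, obtaining a monoidal isomorphism $F_{\Spd k}\circ i^*Rj_*\cong F_{\Spd C}$. Tannakian reconstruction then produces $\scr{S}_{\YZ}$ over $\Lambda$, and commutativity of the displayed squares follows from termwise commutativity. The main obstacle is checking that the symmetric monoidal structures constructed at each level $\Lambda/\lambda^n$ are genuinely compatible under the transition maps $\Lambda/\lambda^{n+1}\twoheadrightarrow \Lambda/\lambda^n$, so that the limiting structure is well-defined on the Satake category over $\Lambda$. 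This compatibility is guaranteed by the functorial nature of the construction: each torsion-level monoidal structure arises from the six-functor formalism, which by Lemma \ref{lemm:modcomm} is itself compatible with reduction mod $\lambda^n$.
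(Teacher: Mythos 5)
Your proposal follows essentially the same route as the paper: express $\Det$, $\Det^{\ULA}$, and $\Sat$ over $\Lambda$ as inverse limits over $n$ of the corresponding $\Lambda/\lambda^n$-categories, invoke Lemma \ref{lemm:modcomm} (and Lemma \ref{lemm:ULAlim}) to check that all relevant functors commute with $-\otimes^{\bb{L}}_{\Lambda}\Lambda/\lambda^n$, and pass the torsion-level statements to the limit. The paper's proof is a one-line appeal to exactly this mechanism, and your elaboration of the details — including the observation about compatibility of the monoidal structures under the transition maps — is an accurate unpacking of that argument.
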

\section{Rational coefficient case}\label{sec:frac}
Moreover, we can prove a similar result for $\Lambda[\ell^{-1}]$-coefficient.
\begin{defi}
Let $X$ be a small v-stack.
The category $\Det(X,\Lambda[\ell^{-1}])$ is defined as the category
\[
\Det(X,\Lambda)[\ell^{-1}]
\]
obtained by inverting $\ell$ in the Hom-sets of $\Det(X,\Lambda)$.
There is a natural functor
\[
-\otimes_{\Lambda}\Lambda[\ell^{-1}]\colon \Det(X,\Lambda)\to \Det(X,\Lambda[\ell^{-1}]).
\]
\end{defi}
For any map $f\colon X\to Y$, the adjoint pair $(f^*,Rf_*)$ with coefficients in $\Lambda$ induces the adjoint pair
\begin{align*}
f^*&\colon \Det(Y,\Lambda[\ell^{-1}])\to \Det(X,\Lambda[\ell^{-1}]),\\
Rf_*&\colon \Det(X,\Lambda[\ell^{-1}])\to \Det(Y,\Lambda[\ell^{-1}]).
\end{align*}
Let $f\colon X\to Y$ be a map of small v-stacks which is compactifiable, representable in locally spatial diamonds and with $\dimtrg f<\infty$.
The adjoint pair $(Rf_!,Rf^!)$ with coefficients in $\Lambda$ induces the adjoint pair
\begin{align*}
Rf_!&\colon \Det(X,\Lambda[\ell^{-1}])\to \Det(Y,\Lambda[\ell^{-1}]),\\
Rf^!&\colon \Det(Y,\Lambda[\ell^{-1}])\to \Det(X,\Lambda[\ell^{-1}]).
\end{align*}
\begin{defi}
We say that $A\in \Det(\Hck_{G,S},\Lambda[\ell^{-1}])$ is universal locally acyclic(ULA) if $A$ is in the essential image of $\Det^{\ULA}(\Hck_{G,S},\Lambda)$ under the functor
\[
-\otimes_{\Lambda}\Lambda[\ell^{-1}]\colon \Det(\Hck_{G,S},\Lambda)\to \Det(\Hck_{G,S},\Lambda[\ell^{-1}]).
\]
Write $\Det^{\ULA}(\Hck_{G,S},\Lambda[\ell^{-1}])$ for this essential image.
\end{defi}
By definition, $\Det^{\ULA}(\Hck_{G,S},\Lambda[\ell^{-1}])$ is equivalent to the category 
\[
\Det^{\ULA}(\Hck_{G,S},\Lambda)[\ell^{-1}]
\]
Also, we define the (flat) perversity as follows.
\begin{defi}
For $A\in \Det^{\ULA}(\Hck_{G,S},\Lambda[\ell^{-1}])$, 
we say that $A$ is perverse over $\Lambda[\ell^{-1}]$ if and only if $A$ is isomorphic to an image of a flat perverse sheaf in $\Det^{\ULA}(\Hck_{G,S},\Lambda)$ under the functor $-\otimes_{\Lambda}^{\bb{L}} \Lambda[\ell^{-1}]$.
We define
\[
\Sat(\Hck_{G,S},\Lambda[\ell^{-1}])\subset \Det(\Hck_{G,S},\Lambda[\ell^{-1}])
\]
as the full subcategory of the perverse ULA sheaves.
\end{defi}
By definition, we have
\[
\Sat(\Hck_{G,S},\Lambda[\ell^{-1}])\simeq\Sat(\Hck_{G,S},\Lambda)[\ell^{-1}].
\]
By inverting $\ell$ in Theorem \ref{thm:mainint}, we get the following theorem:
\begin{thm}\label{thm:mainfrac}
Theorem \ref{thm:pullackequiv}, Corollary \ref{cor:preserveSat}, Proposition \ref{prop:monoidal}, Theorem \ref{thm:main} and Theorem \ref{thm:CT} hold even if $\Lambda$ is a finite extension over $\QQ_{\ell}$.
\end{thm}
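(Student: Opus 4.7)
The plan is to deduce Theorem \ref{thm:mainfrac} from Theorem \ref{thm:mainint} by uniformly inverting $\ell$ throughout the constructions of Section \ref{sec:int}. By the definitions in Section \ref{sec:frac}, the category $\Det(X,\Lambda[\ell^{-1}])$ is built from $\Det(X,\Lambda)$ by localizing Hom sets, the six functors over $\Lambda[\ell^{-1}]$ are those induced from the $\Lambda$-coefficient ones, and the ULA and flat perverse conditions over $\Lambda[\ell^{-1}]$ are defined as essential images of the $\Lambda$-coefficient versions. This gives $\Sat(\Hck_{G,S},\Lambda[\ell^{-1}])\simeq \Sat(\Hck_{G,S},\Lambda)[\ell^{-1}]$ by construction, and every structural operation we care about (the convolution $\star$, exterior product $\boxtimes$, the cohomology functor $F_S$, and the hyperbolic localization $\CT_P[\deg_P]$) commutes with $-\otimes_{\Lambda} \Lambda[\ell^{-1}]$ up to canonical isomorphism, since each is built from six-functor operations that do.

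With these compatibilities in place, I would transport each of the five cited results through the localization in turn. For Theorem \ref{thm:pullackequiv} and Corollary \ref{cor:preserveSat}, an equivalence of $\Lambda$-linear categories remains an equivalence after inverting $\ell$, and a quasi-inverse that commutes with $-\otimes_{\Lambda}\Lambda[\ell^{-1}]$ (such as $Rj_*$ or $i^*$) descends to a quasi-inverse at $\Lambda[\ell^{-1}]$-coefficients. For Proposition \ref{prop:monoidal} and the commutativity of the squares in Theorem \ref{thm:main}, natural isomorphisms of $\Lambda$-linear functors descend to natural isomorphisms of the localized functors. For the symmetric monoidal equivalence $\scr{S}_{\YZ}$ in Theorem \ref{thm:main}, I would run Tannakian reconstruction on the localized symmetric monoidal fiber functor $F_{\Spd k}\otimes_{\Lambda}\Lambda[\ell^{-1}]$, exactly as in the integral case. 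An identical argument provides the monoidal structure on $\CT_P[\deg_P]$ of Theorem \ref{thm:CT}, and the monoidality of (\ref{fml:FCT}) is preserved since it is already monoidal at $\Lambda$-coefficients.

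The main obstacle I expect is matching the Tannakian targets: namely, checking that the natural functor $\Rep(\wh{G},\Lambda)[\ell^{-1}]\to \Rep(\wh{G},\Lambda[\ell^{-1}])$ and its $W_F$-equivariant version are equivalences. For the first, the content is that any algebraic representation of $\wh{G}$ on a finite-dimensional $\Lambda[\ell^{-1}]$-vector space admits a $\wh{G}$-stable finite projective $\Lambda$-lattice, which is standard for split reductive groups over $\Lambda$ via the coaction map. For $\Rep(\wh{G}^{\rm{tw}}\rtimes W_F,\Lambda[\ell^{-1}])$, one additionally uses that a smooth $W_F$-action on a finite-dimensional $\Lambda[\ell^{-1}]$-vector space factors through a finite quotient, so averaging a $\wh{G}$-stable $\Lambda$-lattice over this quotient produces a $\wh{G}^{\rm{tw}}\rtimes W_F$-stable one. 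Granting this identification of Tannakian targets, localizing the natural isomorphisms and monoidal structures from Theorem \ref{thm:mainint} yields precisely the diagrams and equivalences asserted in Theorem \ref{thm:mainfrac}.
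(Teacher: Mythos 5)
Your proposal matches the paper's approach: the paper's own proof is the single sentence ``By inverting $\ell$ in Theorem \ref{thm:mainint}, we get the following theorem,'' relying on the definitions in \S\ref{sec:frac} that set up $\Sat(\Hck_{G,S},\Lambda[\ell^{-1}])\simeq \Sat(\Hck_{G,S},\Lambda)[\ell^{-1}]$ together with the six-functor compatibilities under $-\otimes_{\Lambda}\Lambda[\ell^{-1}]$. Your additional observation that one must also identify the Tannakian targets, i.e.\ that $\Rep(\wh{G},\Lambda)[\ell^{-1}]\to \Rep(\wh{G},\Lambda[\ell^{-1}])$ and its $W_F$-equivariant analogue are equivalences (via existence of $\wh{G}$-stable lattices and, for the semidirect product, passing to a $W_F$-stable lattice using that the smooth action factors through a finite quotient), is a correct and genuine point that the paper leaves entirely implicit.
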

\section{$\Qlb$-coefficient case}
We can prove the result for the $\Qlb$-coefficient case.
\begin{defi}
Let $X$ be a small v-stack.
$\Det(X,\Qlb)$ is defined by
\[
\Det(X,\Qlb)=\lim_{\substack{\longrightarrow \\ L}}\Det(X,L)
\]
where $L$ is a finite extension over $\QQ_{\ell}$, and if $L/L'/\QQ_{\ell}$ is a tower of finite extensions, the transition functor is the functor 
\[
-\otimes_{L'}L\colon \Det(X,L')\to \Det(X,L)
\]
induced by 
\[
-\otimes_{\cal{O}_{L'}}\cal{O}_L\colon \Det(X,\cal{O}_{L'})\to \Det(X,\cal{O}_L).
\]
There is a natural functor
\[
-\otimes_L \Qlb\colon \Det(X,L)\to \Det(X,\Qlb).
\]
\end{defi}
For any map $X\to Y$ of small v-stacks, the adjoint pairs $(f^*,Rf_*)$ with coefficient in a finite extension $L$ over $\QQ_{\ell}$ induce the adjoint pair
\begin{align*}
f^*&\colon \Det(Y,\Qlb)\to \Det(X,\Qlb),\\
Rf_*&\colon \Det(X,\Qlb)\to \Det(Y,\Qlb).
\end{align*}
In fact, the following lemma holds:
\begin{lemm}\label{lemm:LLcompati}
Let $L/L'/\QQ_{\ell}$ be a tower of finite extensions.
\begin{enumerate}
\item[(i)]
For any map $f\colon X\to Y$ of small v-stacks, the following squares are commutative:
\[
\xymatrix{
\Det(Y,L')\ar[r]^{f^*}\ar[d]_{-\otimes_{L'}L}&\Det(X,L')\ar[d]_{-\otimes_{L'}L}\\
\Det(Y,L)\ar[r]^{f^*}&\Det(X,L),
}\quad
\xymatrix{
\Det(X,L')\ar[r]^{Rf_*}\ar[d]_{-\otimes_{L'}L}&\Det(Y,L')\ar[d]_{-\otimes_{L'}L}\\
\Det(X,L)\ar[r]^{Rf_*}&\Det(Y,L).
}
\]
\item[(ii)]
Let $f\colon X\to Y$ be a map of small v-stacks which is compactifiable, representable in locally spatial diamonds and with $\dimtrg f<\infty$.
The following squares are commutative:
\[
\xymatrix{
\Det(X,L')\ar[r]^{Rf_!}\ar[d]_{-\otimes_{L'}L}&\Det(Y,L')\ar[d]_{-\otimes_{L'}L}\\
\Det(X,L)\ar[r]^{Rf_!}&\Det(Y,L),
}\quad
\xymatrix{
\Det(Y,L')\ar[r]^{Rf^!}\ar[d]_{-\otimes_{L'}L}&\Det(X,L')\ar[d]_{-\otimes_{L'}L}\\
\Det(Y,L)\ar[r]^{Rf^!}&\Det(X,L).
}
\]
\end{enumerate}
\end{lemm}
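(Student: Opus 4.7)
The plan is to reduce the lemma to its integral counterpart by the definition of $-\otimes_{L'}L$, then to the torsion case by Lemma \ref{lemm:modcomm}, and finally to appeal to the projection formula using that $\cal{O}_L$ is a finite free $\cal{O}_{L'}$-module.

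By definition, $-\otimes_{L'}L$ on $\Det(X, L')$ is induced from the functor $-\otimes_{\cal{O}_{L'}}\cal{O}_L\colon \Det(X, \cal{O}_{L'}) \to \Det(X, \cal{O}_L)$ by inverting $\ell$. Hence the commutativity of the four squares over $L', L$ is a formal consequence of the commutativity of the analogous squares with $\cal{O}_{L'}, \cal{O}_L$ coefficients, and it suffices to prove the latter. Fix a uniformizer $\lambda$ of $\cal{O}_{L'}$; then $\cal{O}_L$ is finite free and $\lambda$-adically complete over $\cal{O}_{L'}$, and $\cal{O}_L/\lambda^n$ is a finite free $\cal{O}_{L'}/\lambda^n$-module for every $n$. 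By the identification $\Det(X, \cal{O}_L) \simeq \varprojlim_n \Det(X, \cal{O}_L/\lambda^n)$ of \cite[Proposition 26.2]{Sch} together with Lemma \ref{lemm:modcomm} (which ensures that all six functors commute with reduction mod $\lambda^n$, both for $\cal{O}_{L'}$- and for $\cal{O}_L$-coefficients), the integral squares reduce to the corresponding squares in the torsion setting.

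In the torsion case, since $\cal{O}_L/\lambda^n$ is a finite free, hence perfect, $\cal{O}_{L'}/\lambda^n$-module, the extension of scalars $-\otimes_{\cal{O}_{L'}/\lambda^n}\cal{O}_L/\lambda^n$ is, on underlying complexes of $\cal{O}_{L'}/\lambda^n$-modules, essentially a finite direct sum functor. Commutativity with $f^*$ is immediate, since $f^*$ is a tensor-preserving left adjoint and in particular commutes with the change-of-coefficients functor. Commutativity with $Rf_*$ and $Rf_!$ follows from the projection formula applied to the perfect coefficient module $\cal{O}_L/\lambda^n$, matching the additional $\cal{O}_L/\lambda^n$-module structures on both sides. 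Finally, commutativity with $Rf^!$ follows by passing to right adjoints from the case of $Rf_!$.

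The main technical point I expect is setting up the projection formula for $Rf_*$ in this v-stack formalism with finite free coefficient extensions; for $Rf_!$ the projection formula is standard in the Fargues--Scholze framework. Once the projection-formula isomorphism is in place and the $\cal{O}_L$-module structures are checked to match, the four squares follow uniformly.
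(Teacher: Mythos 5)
Your reduction chain (fractional $\to$ integral $\to$ torsion via Lemma \ref{lemm:modcomm}, then exploit that $\cal{O}_L/\lambda^n$ is finite free over $\cal{O}_{L'}/\lambda^n$) matches the paper, and your treatment of $f^*$ is the same. But there are two genuine gaps in the rest.

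First, for $Rf_*$ you flag ``matching the additional $\cal{O}_L/\lambda^n$-module structures on both sides'' as the main technical point and then leave it unresolved. The paper resolves it without invoking a projection formula for $Rf_*$ at all: it introduces the forgetful functor $\mathrm{For}\colon \Det(-,\cal{O}_L/\ell^n)\to \Det(-,\cal{O}_{L'}/\ell^n)$, observes that $\mathrm{For}$ is conservative, and notes that the ``$\mathrm{For}$-square'' for $Rf_*$ commutes because it is the \emph{right adjoint} of the (definitional) $f^*$-square with $-\otimes_{\cal{O}_{L'}/\ell^n}\cal{O}_L/\ell^n$. Pasting the tensor square on top of this $\mathrm{For}$-square gives the outer square, whose vertical composite is a finite direct sum since $\cal{O}_L/\ell^n$ is finite free, and $Rf_*$ visibly commutes with finite direct sums. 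Conservativity of $\mathrm{For}$ then upgrades this to commutativity of the tensor square in $\Det(-,\cal{O}_L/\ell^n)$, with the module structure built in. This is cleaner than trying to set up a projection formula for $Rf_*$ and then separately arguing the $\cal{O}_L$-linearity of the resulting isomorphism.

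Second, your claim that the $Rf^!$ square ``follows by passing to right adjoints from the case of $Rf_!$'' does not give what you want. The right adjoint of the square relating $Rf_!$ and $-\otimes_{L'}L$ is the square relating $Rf^!$ and the \emph{forgetful} functor, not the square relating $Rf^!$ and $-\otimes_{L'}L$ that the lemma asserts. To pass from the $\mathrm{For}$-square to the tensor square you again need the conservativity-plus-finite-direct-sum argument (noting that $Rf^!$, being exact in the triangulated sense, commutes with finite direct sums). So the adjunction shortcut must be supplemented, exactly as in the $Rf_*$ case; the paper's ``(ii) is similar'' is referring to rerunning this same trick, not to a bare adjunction.
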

\begin{proof}
For (i), we need to show that the squares
\[
\xymatrix{
\Det(Y,\cal{O}_{L'})\ar[r]^{f^*}\ar[d]_{-\otimes_{\cal{O}_{L'}}\cal{O}_{L}}&\Det(X,\cal{O}_{L'})\ar[d]_{-\otimes_{\cal{O}_{L'}}\cal{O}_{L}}\\
\Det(Y,\cal{O}_{L})\ar[r]^{f^*}&\Det(X,\cal{O}_{L}),
}
\xymatrix{
\Det(X,\cal{O}_{L'})\ar[r]^{Rf_*}\ar[d]_{-\otimes_{\cal{O}_{L'}}\cal{O}_{L}}&\Det(Y,\cal{O}_{L'})\ar[d]_{-\otimes_{\cal{O}_{L'}}\cal{O}_{L}}\\
\Det(X,\cal{O}_{L})\ar[r]^{Rf_*}&\Det(Y,\cal{O}_{L})
}
\]
are naturally commutative.
By Lemma \ref{lemm:modcomm}, it is enough to prove that the squares
\[
\xymatrix{
\Det(Y,\cal{O}_{L'}/\ell^n)\ar[r]^{f^*}\ar[d]_{-\otimes_{\cal{O}_{L'}/\ell^n}\cal{O}_{L}/\ell^n}&\Det(X,\cal{O}_{L'}/\ell^n)\ar[d]_{-\otimes_{\cal{O}_{L'}/\ell^n}\cal{O}_{L}/\ell^n}\\
\Det(Y,\cal{O}_{L}/\ell^n)\ar[r]^{f^*}&\Det(X,\cal{O}_{L}/\ell^n),
}
\xymatrix{
\Det(X,\cal{O}_{L'}/\ell^n)\ar[r]^{Rf_*}\ar[d]_{-\otimes_{\cal{O}_{L'}/\ell^n}\cal{O}_{L}/\ell^n}&\Det(Y,\cal{O}_{L'}/\ell^n)\ar[d]_{-\otimes_{\cal{O}_{L'}/\ell^n}\cal{O}_{L}/\ell^n}\\
\Det(X,\cal{O}_{L}/\ell^n)\ar[r]^{Rf_*}&\Det(Y,\cal{O}_{L}/\ell^n)
}
\]
are naturally commutative.
The commutativity of the first square follows from the definition.
For the second square, since $\cal{O}_{L}/\ell^n$ is a finite free module over $\cal{O}_{L'}/\ell^n$, the outer square in the diagram
\[
\xymatrix{
\Det(X,\cal{O}_{L'}/\ell^n)\ar[r]^{Rf_*}\ar[d]_{-\otimes_{\cal{O}_{L'}/\ell^n}\cal{O}_{L}/\ell^n}&\Det(Y,\cal{O}_{L'}/\ell^n)\ar[d]_{-\otimes_{\cal{O}_{L'}/\ell^n}\cal{O}_{L}/\ell^n}\\
\Det(X,\cal{O}_{L}/\ell^n)\ar[r]^{Rf_*}\ar[d]_{\rm{For}}&\Det(Y,\cal{O}_{L}/\ell^n)\ar[d]^{\rm{For}}\\
\Det(X,\cal{O}_{L'}/\ell^n)\ar[r]^{Rf_*}&\Det(Y,\cal{O}_{L'}/\ell^n)
}
\]
is commutative, where the vertical functors $\rm{For}$ are forgetful functors.
As the functor
\[
\rm{For}\colon \Det(Y,\cal{O}_{L}/\ell^n)\to \Det(Y,\cal{O}_{L'}/\ell^n)
\]
is conservative, it suffices to show that the following square is naturally commutative:
\[
\xymatrix{
\Det(X,\cal{O}_{L}/\ell^n)\ar[r]^{Rf_*}\ar[d]_{\rm{For}}&\Det(Y,\cal{O}_{L}/\ell^n)\ar[d]_{\rm{For}}\\
\Det(X,\cal{O}_{L'}/\ell^n)\ar[r]^{Rf_*}&\Det(Y,\cal{O}_{L'}/\ell^n).
}
\]
This square is the right adjoint of the first square.
The proof of (ii) is similar.
\end{proof}
\begin{defi}
We say that $A\in \Det(\Hck_{G,S},\Qlb)$ is universal locally acyclic (ULA) if there exists a finite extension $L$ over $\bb{Q}_{\ell}$ such that $A$ is in the essential image of $\Det^{\ULA}(\Hck_{G,S},L)$ under the functor
\[
-\otimes_L \Qlb\colon \Det(\Hck_{G,S},L)\to \Det(\Hck_{G,S},\Qlb).
\]
Write $\Det^{\ULA}(\Hck_{G,S},\Qlb)$ for the full subcategory of ULA complexes in $\Det(\Hck_{G,S},\Qlb)$.
\end{defi}
By definition, it holds that
\[
\Det^{\ULA}(\Hck_{G,S},\Qlb)=\lim_{\substack{\longrightarrow \\ L}}\Det^{\ULA}(\Hck_{G,S},L).
\]
We define the perversity as follows.
\begin{defi}
We say that $A\in \Det^{\ULA}(\Hck_{G,S},\Qlb)$ is perverse if there exists a finite extension $L$ over $\bb{Q}_{\ell}$ such that $A$ is in the essential image of $\Sat(\Hck_{G,S},L)$ under the functor
\[
-\otimes_{L}\Qlb\colon \Det^{\ULA}(\Hck_{G,S},L)\to \Det^{\ULA}(\Hck_{G,S},\Qlb).
\]
We define
\[
\Sat(\Hck_{G,S},\Lambda[\ell^{-1}])\subset \Det(\Hck_{G,S},\Lambda[\ell^{-1}])
\]
as the full subcategory of the perverse ULA sheaves.
\end{defi}
By definition, it holds that
\[
\Sat(\Hck_{G,S},\Qlb)=\lim_{\substack{\longrightarrow \\ L}}\Sat(\Hck_{G,S},L).
\]
By passing the limit in Theorem \ref{thm:mainfrac} and using Lemma \ref{lemm:LLcompati}, we get the following theorem:
\begin{thm}\label{thm:mainQlb}
Theorem \ref{thm:pullackequiv}, Corollary \ref{cor:preserveSat}, Proposition \ref{prop:monoidal}, Theorem \ref{thm:main} and Theorem \ref{thm:CT} hold even if $\Lambda=\Qlb$.
\end{thm}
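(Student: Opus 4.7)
The plan is to pass to the filtered colimit over finite extensions $L/\QQ_\ell$ of the statements already established in Theorem \ref{thm:mainfrac}. By the defining colimit presentations
\[
\Det^{\ULA}(\Hck_{G,S},\Qlb) = \colim_{L}\Det^{\ULA}(\Hck_{G,S},L),\quad \Sat(\Hck_{G,S},\Qlb) = \colim_{L}\Sat(\Hck_{G,S},L),
\]
together with the analogous $\Rep(\wh{G},\Qlb) = \colim_L \Rep(\wh{G},L)$ and $\rm{Proj}(\Qlb)=\colim_L\Vect(L)$, the whole matter reduces to verifying that every functor appearing in the earlier theorems commutes with the transition functors $-\otimes_{L'}L$ along a tower $L/L'/\QQ_\ell$.

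First, I would record that $j^*$, $Rj_*$, $i^*$, $(p_F)^*$, and the hyperbolic localization functors $R(p^+_k)_!(q^+_k)^*$ all commute with $-\otimes_{L'}L$: this is a direct consequence of Lemma \ref{lemm:LLcompati}. The convolution product $\star$, the external tensor product $\boxtimes_{\Lambda,S}$, and the fiber functors $F_S$ (built from $R\pi_{G,S*}$ and cohomology truncations) then inherit the same compatibility, again via Lemma \ref{lemm:LLcompati}. This step promotes Theorem \ref{thm:pullackequiv}, Corollary \ref{cor:preserveSat}, and Proposition \ref{prop:monoidal} from $L$-coefficients to $\Qlb$-coefficients by taking colimits of equivalences of categories and of natural isomorphisms.

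Next, I would obtain the symmetric monoidal structure on $(\Sat(\Hck_{G,\Spd k},\Qlb),\star)$ and the monoidal isomorphism $F_{\Spd k}\circ i^*Rj_* \cong F_{\Spd C}$ of Corollary \ref{cor:symmonF} by passing to colimits over finite extensions; compatibility of the $L$-level monoidal isomorphisms with the transition functors ensures this colimit gives a well-defined monoidal isomorphism over $\Qlb$. Then applying Tannakian reconstruction to the resulting symmetric monoidal fiber functor yields the equivalence $\scr{S}_{\YZ}\colon \Sat(\Hck_{G,\Spd k},\Qlb) \overset{\sim}{\to} \Rep(\wh{G},\Qlb)$, fitting into the required commuting squares of Theorem \ref{thm:main}. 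The same colimit argument, applied to the symmetric monoidal structure on $\CT_P[\deg_P]$ from Theorem \ref{thm:CT}, gives its $\Qlb$-coefficient version.

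The main subtlety I anticipate lies in the Tannakian step: one must check that the Tannakian reconstruction carried out directly over $\Qlb$ coincides with the colimit of the Tannakian reconstructions over each finite $L$. Because every object of $\Sat(\Hck_{G,\Spd k},\Qlb)$ and each morphism between two such objects comes, after clearing denominators, from some level $L$, and because $\Rep(\wh{G},\Qlb)$ itself is the filtered colimit of $\Rep(\wh{G},L)$ on finite projective modules, this should follow formally once the compatibility of the monoidal isomorphism in Corollary \ref{cor:symmonF} with $-\otimes_{L'}L$ is spelled out. Verifying this compatibility carefully, while routine, is the only nontrivial bookkeeping in the proof.
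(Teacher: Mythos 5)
Your proposal is correct and follows essentially the same route as the paper, which simply passes to the filtered colimit over finite extensions $L/\QQ_\ell$ of Theorem \ref{thm:mainfrac} and invokes Lemma \ref{lemm:LLcompati} for compatibility of the functors with the transition maps $-\otimes_{L'}L$. You spell out in more detail than the paper the bookkeeping around the monoidal structures and the Tannakian step, but the underlying argument is the same.
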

\bibliographystyle{test}
\bibliography{twogeomsatbib}

\end{document}